\author[P.~Leonetti]{Paolo Leonetti}
\address{
Department of Economics, 
Universit\`a degli Studi dell'Insubria, via Monte Generoso 71, Varese 21100, Italy}
\keywords{Infinite matrices of linear operators; uniform summability; almost convergence; uniform superior limit; Silverman--Toeplitz characterization; inclusion of cores.}
\subjclass[2020]{Primary: 40C05, 40D05, 40C99. 
Secondary: 
40A35, 
40D25, 40G15, 54A20.}
\title{On uniform summability}
\newcommand{\vertiii}[1]{{\left\vert\kern-0.25ex\left\vert\kern-0.25ex\left\vert #1 
    \right\vert\kern-0.25ex\right\vert\kern-0.25ex\right\vert}}
   \def\MR#1{}
\newtheorem{thm}{Theorem}[section]
\newtheorem{cor}[thm]{Corollary}
\newtheorem{prop}[thm]{Proposition}
\theoremstyle{definition} 
\newtheorem{defi}[thm]{Definition}
\let\olddefi\defi
\renewcommand{\defi}{\olddefi\normalfont}
\let\oldexample\example
\renewcommand{\example}{\oldexample\normalfont}
\let\oldrmk\rmk
\renewcommand{\rmk}{\oldrmk\normalfont}
\theoremstyle{remark}
\newtheorem{claim}{\textsc{Claim}}
\newtheorem*{claim*}{\textsc{Claim}}
\providecommand{\MR}[1]{}
\providecommand{\MR}{\relax\ifhmode\unskip\space\fi MR }
\providecommand{\href}[2]{#2}
\begin{document}

\maketitle
\thispagestyle{empty}

\begin{abstract}
\noindent 
Let $\mathscr{A}$ be a nonempty set of infinite matrices of linear operators between two topological vector spaces. We show that a sequence is uniformly $\mathscr{A}$-summable if and only if it is $B$-summable for all matrices $B$ of linear operators such that the $n$th row of $B$ is the $n$th row for some $A \in \mathscr{A}$. This extends the main result of Bell in [Proc.~Amer.~Math.~Soc.~\textbf{38} (1973), 548--552]. 
We also provide several applications including uniform versions of Silverman--Toeplitz theorem, characterizations of almost regular matrices, uniform superior limits, and inclusion of ideal cores. Basically, our methods allow to translate ordinary results into their uniform versions, using directly the former ones. 
\end{abstract}

\section{Introduction and Main results}\label{sec:intro}

The aim of this work is to provide the connection between uniform summability and ordinary summability for infinite matrices of linear operators between topological vector spaces. Informally, this will allow to carry \textquotedblleft ordinary\textquotedblright\, statements into their \textquotedblleft uniform\textquotedblright\, versions, without repeating nor adapting the original proofs. 
We provide also in Section \ref{sec:applications} several applications which recover and extend many known results, including characterizations of almost regular matrices, uniform superior limits, and inclusions of cores for real sequences. 

To state more precisely our results, some notations are in order. 
Let $X,Y$ be topological vector spaces and denote by $\mathcal{L}(X,Y)$ the set of (not necessarily continuous) linear operators from $X$ into $Y$. 
Given an infinite matrix $A=(A_{n,k}: n,k \in \omega)$ of linear operators in $\mathcal{L}(X,Y)$, we write 
$$
\mathrm{dom}(A):=
\left\{\bm{x}=(x_0,x_1,\ldots) \in X^\omega: \sum\nolimits_k A_{n,k}x_k \text{ converges for all }n\in\omega\right\},
$$
where $\omega:=\{0,1,\ldots\}$. 
We also write $A\bm{x}:=(A_n\bm{x}: n \in \omega)$, where $A_n\bm{x}:=\sum\nolimits_k A_{n,k}x_k$ for each $n \in \omega$; hence, $A_n$ acts as a linear operator on $\mathrm{dom}(A)$ for each $n \in \omega$. A sequence $\bm{x}$ is said to be $A$\emph{-summable} if the transformed sequence $A\bm{x}$ (is well defined and) converges. We refer the reader to \cite{MR568707} for a textbook exposition on the theory of matrices of linear operators. 

Let $\mathscr{A}$ be a nonempty set of matrices of linear operators in $\mathcal{L}(X,Y)$, and define $\kappa:=|\mathscr{A}|$, so that we can index the matrices in $\mathscr{A}$ as $\{A^\nu: \nu<\kappa\}$. 
Let us also write $\mathrm{dom}(\mathscr{A}):=\bigcap_{\nu<\kappa}\mathrm{dom}(A^\nu)$,  
%
and
let $\mathscr{B}_\mathscr{A}$ be the set of all matrices $B=(B_{n,k})$ of linear operators in $\mathcal{L}(X,Y)$ such that the $n$th row of $B$ is the $n$th row of some $A \in \mathscr{A}$; equivalently, 
$B \in \mathscr{B}_\mathscr{A}$ 
if and only if 
for all $n \in \omega$ there exists $\nu<\kappa$ such that $B_{n,k}=A^\nu_{n,k}$ for all $k \in \omega$. 
It is clear from its definition that $\mathrm{dom}(\mathscr{A})=\bigcap \left\{\mathrm{dom}(B): B \in \mathscr{B}_{\mathscr{A}}\right\}$. 

Let also $\mathcal{I}$ be an ideal on $\omega$, that is, a family of subsets of $\omega$ which is stable under finite unions and subsets. Unless otherwise stated, we assume that $\omega\notin \mathcal{I}$ and that the family $\mathrm{Fin}$ of finite subsets of $\omega$ is contained in $\mathcal{I}$. We denote by $\mathcal{I}^\star:=\{\omega\setminus S: S \in \mathcal{I}\}$ its dual filter. 
A sequence $\bm{x} \in X^\omega$ is $\mathcal{I}$\emph{-convergent to} $\eta\in X$, shortened as $\mathcal{I}\text{-}\lim \bm{x}=\eta$, if $\{n \in \omega: x_n \notin U\} \in \mathcal{I}$ for every neighborhood $U$ of $\eta$ (of course, if $\mathcal{I}=\mathrm{Fin}$, this coincides with ordinary convergence). 
An ideal $\mathcal{I}$ is \emph{countably generated} if there exists a sequence $(S_n)$ in $\mathcal{I}$ such that, for all $S\subseteq \omega$, 
$S \in \mathcal{I}$ if and only if $S\subseteq \bigcup_{n \in F}S_n$ for some $F \in \mathrm{Fin}$. We refer to \cite{MR1711328} for an excellent monograph on the theory of ideals on $\omega$ (the reader who is not interested in results involving ideals can assume hereafter that the latter ones are always $\mathrm{Fin}$). 

\begin{defi}\label{defi:uniform}
Let $\mathscr{A}$ be a nonempty set of matrices of linear operators in $\mathcal{L}(X,Y)$, where $X,Y$ are topological vector spaces. Let also $\mathcal{I}$ be an ideal on $\omega$. A sequence $\bm{x}\in X^\omega$ is said to be \emph{uniformly }$\mathscr{A}$\emph{-summable} to a vector $\eta \in Y$, shortened as
$$
\mathcal{I}\text{-}\lim \mathscr{A}\bm{x}=\eta,
$$
if $\bm{x} \in \mathrm{dom}(\mathscr{A})$ and 
$\mathcal{I}\text{-}\lim A\bm{x}=\eta$ uniformly on $A \in \mathscr{A}$. 
\end{defi}
Equivalently, 
this 
means that, for each neighborhood $U$ of $\eta$, there exists $S \in \mathcal{I}^\star$ such that $A_n\bm{x} \in U$ for all $n \in S$ and all $A \in \mathscr{A}$. 
Of course, if $\mathscr{A}=\{A\}$ then $\mathcal{I}\text{-}\lim \mathscr{A}\bm{x}=\eta$ is equivalent to the classical $\mathcal{I}\text{-}\lim A\bm{x}=\eta$. 
As remarked by Bell in \cite{MR310489}, the above definition in the case $X=Y=\mathbf{R}$ and $\mathcal{I}=\mathrm{Fin}$ corresponds to notions already studied in the literature, e.g., $F_A$-summability by Lorentz \cite{MR27868}, $A_B$ summability by Mazhar and Siddiqi \cite{MR259482}, or order summability $[g]$ by Jurkat and Peyerimhoff \cite{MR282142}; very recently, it appeared also as [weakly] almost convergence in normed spaces by Karaku{\c{s}} and Ba{\c{s}}ar, see \cite[Definition 1.1]{MR4042592}.

With the above premises, we are going to show that a sequence is uniformly $\mathscr{A}$-summable if and only if it is $B$-summable for all matrices $B \in \mathscr{B}_{\mathscr{A}}$.  
%
\begin{thm}\label{thm:characterizationuniformconvergence}
Let $X,Y$ be topological vector spaces such that $Y$ is separable, and fix nonempty set $\mathscr{A}$ of matrices $A=(A_{n,k})$ of linear operators in $\mathcal{L}(X,Y)$. 
Let also $\mathcal{I}$ be a countably generated ideal on $\omega$ and pick a sequence $\bm{x} \in \mathrm{dom}(\mathscr{A})$. 
Then the following are equivalent\textup{:}
\begin{enumerate}[label={\rm (\roman*)}]
\item \label{item:1uniformcondition} There exists $\eta_1 \in Y$ such that $\mathcal{I}\text{-}\lim \mathscr{A}\bm{x}=\eta_1$\textup{;}
\item \label{item:2uniformcondition} For each $B \in \mathscr{B}_\mathscr{A}$ there exists $\eta_B\in Y$ such that $\mathcal{I}\text{-}\lim B\bm{x}=\eta_B$\textup{;}
\item \label{item:3uniformcondition} There exists $\eta_2 \in Y$ such that $\mathcal{I}\text{-}\lim B\bm{x}=\eta_2$ for all $B \in 
\mathscr{B}_{\mathscr{A}}$\textup{.}
\end{enumerate}
In addition, in such case, $\eta_1=\eta_2$. 
\end{thm}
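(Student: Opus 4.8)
The plan is to close the cycle \ref{item:1uniformcondition}$\Rightarrow$\ref{item:3uniformcondition}$\Rightarrow$\ref{item:2uniformcondition}$\Rightarrow$\ref{item:1uniformcondition}, the last implication being the only substantial one. Note first that, as recalled in the excerpt, $\mathrm{dom}(\mathscr{A})=\bigcap\{\mathrm{dom}(B):B\in\mathscr{B}_{\mathscr{A}}\}$, so $B\bm{x}$ is well defined for every $B\in\mathscr{B}_{\mathscr{A}}$. For \ref{item:1uniformcondition}$\Rightarrow$\ref{item:3uniformcondition}, take $\eta_2:=\eta_1$: since every row of every $B\in\mathscr{B}_{\mathscr{A}}$ is a row of some $A\in\mathscr{A}$, for each neighborhood $U$ of $\eta_1$ we have $\{n:B_n\bm{x}\notin U\}\subseteq\{n:\exists A\in\mathscr{A},\ A_n\bm{x}\notin U\}$, and the latter set lies in $\mathcal{I}$ by \ref{item:1uniformcondition}; hence $\mathcal{I}\text{-}\lim B\bm{x}=\eta_1$ for all $B\in\mathscr{B}_{\mathscr{A}}$. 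Since $\mathscr{A}\neq\emptyset$ and limits in $Y$ are unique (I take $Y$ Hausdorff to be part of the standing assumptions), this also yields the final assertion $\eta_1=\eta_2$ once \ref{item:3uniformcondition} is in force. The implication \ref{item:3uniformcondition}$\Rightarrow$\ref{item:2uniformcondition} is immediate, with $\eta_B:=\eta_2$.

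For \ref{item:2uniformcondition}$\Rightarrow$\ref{item:1uniformcondition} I would argue in two steps. \emph{Step 1: all the limits $\eta_B$ coincide.} Fix $B,B'\in\mathscr{B}_{\mathscr{A}}$. Since $\mathcal{I}$ is a proper countably generated ideal containing $\mathrm{Fin}$, it is not maximal (a maximal ideal extending $\mathrm{Fin}$ is the dual of a free ultrafilter, which is not countably generated), so there is $P\subseteq\omega$ with $P\notin\mathcal{I}$ and $\omega\setminus P\notin\mathcal{I}$. Let $B''$ be the matrix whose $n$th row equals the $n$th row of $B$ for $n\in P$ and the $n$th row of $B'$ for $n\notin P$; then $B''\in\mathscr{B}_{\mathscr{A}}$, so by \ref{item:2uniformcondition} there is $\eta_{B''}\in Y$ with $\mathcal{I}\text{-}\lim B''\bm{x}=\eta_{B''}$. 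The elementary fact to use here is that two $\mathcal{I}$-convergent sequences which agree on a set not in $\mathcal{I}$ must have the same $\mathcal{I}$-limit: if their limits were distinct, choose disjoint neighborhoods of the two limits; then the set on which the sequences agree would be covered by the two (small) exceptional sets, a contradiction. Applying this to $B''\bm{x}$ and $B\bm{x}$ (equal on $P$) and to $B''\bm{x}$ and $B'\bm{x}$ (equal on $\omega\setminus P$) gives $\eta_B=\eta_{B''}=\eta_{B'}$. Denote by $\eta$ this common value.

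\emph{Step 2: uniformity.} It suffices to show that for every neighborhood $U$ of $\eta$ the set $E_U:=\{n\in\omega:\exists A\in\mathscr{A},\ A_n\bm{x}\notin U\}$ belongs to $\mathcal{I}$; indeed then $S:=\omega\setminus E_U\in\mathcal{I}^\star$ witnesses $\mathcal{I}\text{-}\lim\mathscr{A}\bm{x}=\eta$, which is \ref{item:1uniformcondition} with $\eta_1:=\eta$. Suppose instead that $E_U\notin\mathcal{I}$ for some neighborhood $U$ of $\eta$. For each $n\in E_U$ pick $\nu(n)<\kappa$ with $A^{\nu(n)}_n\bm{x}\notin U$, and let $B\in\mathscr{B}_{\mathscr{A}}$ have $n$th row equal to the $n$th row of $A^{\nu(n)}$ for $n\in E_U$ and equal to the $n$th row of $A^0$ for $n\notin E_U$. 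Then $\{n:B_n\bm{x}\notin U\}\supseteq E_U\notin\mathcal{I}$, so $\mathcal{I}\text{-}\lim B\bm{x}=\eta$ is impossible, contradicting \ref{item:2uniformcondition} together with Step 1. This finishes the plan.

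The main obstacle, as I see it, is Step 1: one cannot pass from \ref{item:2uniformcondition} to the uniform statement \ref{item:1uniformcondition} directly, and the ``spliced'' matrix $B''$ is precisely what forces a partition of $\omega$ into two sets not in $\mathcal{I}$ — i.e.\ non-maximality of $\mathcal{I}$ — together with the small lemma on $\mathcal{I}$-limits of sequences that agree on a large set. I should remark that in the route above I only seem to use that $Y$ is Hausdorff and that $\mathcal{I}$ is not maximal, both of which follow from the stated hypotheses; in writing out the actual proof I would want to check whether separability of $Y$ and the full strength of ``countably generated'' are genuinely needed here (for instance if one instead reduces $\mathscr{A}$ to a countable subfamily or passes to a metrizable model), or whether they can be weakened.
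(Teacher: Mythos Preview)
Your argument is correct. The implications \ref{item:1uniformcondition}$\Rightarrow$\ref{item:3uniformcondition}$\Rightarrow$\ref{item:2uniformcondition} and your Step~1 reproduce, up to relabelling, the paper's proofs of \ref{item:1uniformcondition}$\Rightarrow$\ref{item:2uniformcondition} and \ref{item:2uniformcondition}$\Rightarrow$\ref{item:3uniformcondition} (the paper also splices two matrices along a partition $\{I,I'\}$ of $\omega$ with $I,I'\notin\mathcal{I}$, using that a countably generated ideal is $F_\sigma$ and hence not maximal).

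Your Step~2, however, is genuinely different from --- and considerably simpler than --- the paper's proof of \ref{item:3uniformcondition}$\Rightarrow$\ref{item:1uniformcondition}. The paper negates \ref{item:1uniformcondition} globally (``for all $\eta\in Y$ there is a bad neighborhood''), then invokes separability of $Y$ to enumerate a dense set $\{\eta_m:m\in\omega\}$, uses countable generation to fix a decreasing cofinal sequence $(S_t)$ in $\mathcal{I}^\star$, and builds an elaborate diagonal $B\in\mathscr{B}_{\mathscr{A}}$ indexed by a pair of auxiliary functions $f,g$ so as to hit infinitely many $m$'s and thus contradict \ref{item:3uniformcondition}. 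You instead exploit that the common limit $\eta$ is already in hand (from Step~1), look at the single set $E_U=\{n:\exists A\in\mathscr{A},\ A_n\bm{x}\notin U\}$ for a fixed neighborhood $U$ of $\eta$, and build $B$ by choosing one witnessing row per $n\in E_U$. This avoids both the dense enumeration and the $(S_t)$ machinery entirely.

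Your closing remark is therefore on point: your route uses only that $Y$ is Hausdorff and that $\mathcal{I}$ is not maximal, whereas the paper's route genuinely consumes separability of $Y$ and the decreasing generating sequence for $\mathcal{I}^\star$. (The paper itself later notes, in the proof of Theorem~\ref{thm:uniformtoeplitz}, that separability is not used in \ref{item:1uniformcondition}$\Rightarrow$\ref{item:2uniformcondition} and \ref{item:2uniformcondition}$\Rightarrow$\ref{item:3uniformcondition}; your argument shows it is not needed for the remaining implication either.) So your proof in fact establishes the theorem under weaker hypotheses than stated.
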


We remark that the case where $X=Y=\mathbf{R}$, $\mathcal{I}=\mathrm{Fin}$, and $\kappa=\omega$ has been proved by Bell in \cite[Theorem 1]{MR310489}.
In the same article, Bell proves its usefulness in the context of Fourier effectiveness and strongly regular matrices, cf. also \cite{MR427890, MR358128}. We remark also that the class of countably generated ideals on $\omega$ includes, up to isomorphism, the ideal $\mathrm{Fin}$, the Fubini product $\emptyset \times \mathrm{Fin}$, and the Fubini sum $\mathrm{Fin}\oplus \mathcal{P}(\omega)$; see \cite[Proposition 1.2.8]{MR1711328} for details and cf. also \cite[Section 2]{MR3543775} and \cite[Remark 2.16]{Leo22}.

We discuss in Section \ref{sec:applications} several applications of Theorem \ref{thm:characterizationuniformconvergence}, mainly in operator versions. This will extend and provide a unified framework for many known characterizations. 


The proofs of Theorem \ref{thm:characterizationuniformconvergence} and  the next results are given in Section \ref{sec:proofs}. 

\section{Applications and Related Results}\label{sec:applications}

\subsection{A uniform Silverman--Toeplitz theorem} 
Let us suppose hereafter that $X$ and $Y$ are Banach spaces and endow $\mathcal{L}(X,Y)$ (and all its subspaces) with the strong operator topology, so that $\lim_n T_n=T$ means that $(T_nx: n \in \omega)$ is convergent in the norm of $Y$ to $Tx$ for all $x \in X$. The subspace of bounded linear operators of $\mathcal{L}(X,Y)$ is denoted by $\mathcal{B}(X,Y)$. 

Given an infinite matrix $A=(A_{n,k}: n,k \in \omega)$ of linear operators  $A_{n,k} \in \mathcal{L}(X,Y)$, for each $n \in \omega$ and $E\subseteq \omega$ we write $A_{n,E}:=(A_{n,k}: k \in E)$, and define the group norm 
$$
\|A_{n,E}\|:=\sup\left\{\left\|\sum\nolimits_{k \in F}A_{n,k}x_k\right\|: F\subseteq E \text{ is finite and each }x_k \in B_X\right\}, 
$$
where $B_X$ stands for the closed unit ball of $X$, cf. e.g.  \cite{MR0447877, MR568707}. If $E=\omega\setminus k$, we shorten $A_{n,E}$ with $A_{n,\ge k}$ (as usual $k \in \omega$ is identified with the set $\{0,1,\ldots,k-1\}$). 
Note that, if $X=Y=\mathbf{R}$ and each $A_{n,k}$ is represented by  the real number $a_{n,k}$, then $\|A_{n,E}\|=\sum_{k \in E}|a_{n,k}|$.  

For each sequence subspace $\mathcal{A}\subseteq X^\omega$ and $\mathcal{B}\subseteq Y^\omega$, let $(\mathcal{A}, \mathcal{B})$ be the set of matrices $A=(A_{n,k}: n,k \in \omega)$ of 
linear operators in $\mathcal{L}(X,Y)$ such that 
$$
\mathcal{A} \subseteq \mathrm{dom}(A)
\,\,\,\text{ and }\,\,\,
A\bm{x} \in \mathcal{B} \,\text{ for all }\bm{x} \in \mathcal{A}.
$$ 

Moreover, given an ideal $\mathcal{I}$ on $\omega$, a sequence $\bm{x}\in X^\omega$ is said to be $\mathcal{I}$-bounded if $\{n \in \omega: \|x_n\|\ge \kappa\}\in \mathcal{I}$ for some $\kappa \in \mathbf{R}$. The vector space of $\mathcal{I}$-bounded sequences is denoted by $\ell_\infty(X, \mathcal{I})$, we write $c(X,\mathcal{I})$, $c_0(X,\mathcal{I})$, and $c_{00}(X,\mathcal{I})$ for the subspaces of $\mathcal{I}$-convergent sequences, $\mathcal{I}$-convergent to $0$ sequences, and sequences which are supported on some elements of $\mathcal{I}$, respectively (of course, if $\mathcal{I}=\mathrm{Fin}$ and $X=\mathbf{R}$, they correspond to the classical sequence spaces $c$, $c_0$, and $c_{00}$, resp.). In each of these subspaces, a superscript $b$ denotes its intersection with $\ell_\infty(X)$, e.g. $c_{00}^b(X,\mathcal{I}):=c_{00}(X,\mathcal{I})\cap \ell_\infty(X)$; cf. \cite[Section 2]{Leo22} for further details.

The next definition corresponds to the uniform version of \cite[Definition 2.1]{Leo22}:
\begin{defi}\label{defi:IJregTfamily}
Let $X,Y$ be topological vector spaces and fix $T \in \mathcal{L}(X,Y)$. Let also $\mathcal{I}, \mathcal{J}$ be ideals on $\omega$. A nonempty set of matrices $\mathscr{A}$
is said to be $(\mathcal{I}, \mathcal{J})$\emph{-regular with respect to} $T$ if $A \in (c^b(X,\mathcal{I}), c^b(Y,\mathcal{J}))$ for all $A \in \mathscr{A}$ and 
\begin{equation}\label{eq:uniformAIJregular}
\mathcal{J}\text{-}\lim A\bm{x}=T(\mathcal{I}\text{-}\lim \bm{x}) \,\,
\text{ uniformly on }A \in \mathscr{A}
\end{equation}
for all bounded $\mathcal{I}$-convergent sequences $\bm{x} \in c^b(X,\mathcal{I})$. 
\end{defi}
Of course, if $X=Y=\mathbf{R}$, $\mathcal{I}=\mathcal{J}=\mathrm{Fin}$, $T$ is the identity operator, and $\mathscr{A}=\{A\}$, then the above notion coincides with the classical regularity of $A$, see e.g. \cite[Chapter 7]{MR0390692}. 

In the case where $\mathscr{A}$ is a singleton, we recall the following characterization: 
\begin{thm}\label{thm:IJREGULARold}
Fix a linear operator $T \in \mathcal{B}(X,Y)$, where $X,Y$ are Banach spaces. 
Let also $\mathcal{I}, \mathcal{J}$ be ideals on $\omega$ and let $A=(A_{n,k})$ be a matrix of bounded linear operators in $\mathcal{B}(X,Y)$. 

Then $\{A\}$ is $(\mathcal{I}, \mathcal{J})$-regular with respect to $T$ if and only if\textup{:} 
\begin{enumerate}[label={\rm (\textsc{T}\arabic{*})}]
\item \label{item:T1} $\sup_n\|A_{n,\omega}\|<\infty$\textup{;}
\end{enumerate}
\begin{enumerate}[label={\rm (\textsc{T}\arabic{*})}] 
\setcounter{enumi}{2}
\item \label{item:T3} $c^b(X,\mathcal{I})\subseteq \mathrm{dom}(A)$\textup{;}
\end{enumerate}
\begin{enumerate}[label={\rm (\textsc{T}\arabic{*})}] 
\setcounter{enumi}{3}
\item \label{item:T4} $\mathcal{J}\text{-}\lim_n \sum_{k}A_{n,k}=T$\textup{;}
\end{enumerate}
\begin{enumerate}[label={\rm (\textsc{T}\arabic{*})}] 
\setcounter{enumi}{4}
\item \label{item:T5} $\mathcal{J}\text{-}\lim A\bm{x}=0$ for all $\bm{x} \in c_{00}^b(X,\mathcal{I})$
\textup{.} 
\end{enumerate}
\end{thm}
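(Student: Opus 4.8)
This is a Silverman--Toeplitz type characterization, and could be read off by specializing to bounded operators the general result \cite[Theorem 2.9]{Leo22}; I would instead argue directly, splitting the equivalence into necessity and sufficiency.

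\emph{Necessity.} Assuming $\{A\}$ is $(\mathcal{I},\mathcal{J})$-regular with respect to $T$, I would obtain \ref{item:T3}, \ref{item:T4}, \ref{item:T5} by plugging suitable test sequences into the regularity relation: \ref{item:T3} is already part of $A\in(c^b(X,\mathcal{I}),c^b(Y,\mathcal{J}))$; the constant sequence $(x,x,\ldots)$, which is bounded and $\mathcal{I}$-convergent to $x$, yields $\mathcal{J}\text{-}\lim_n(\sum_k A_{n,k})x=Tx$ for every $x\in X$, that is \ref{item:T4}; and an arbitrary $\bm{x}\in c_{00}^b(X,\mathcal{I})$, which is bounded and $\mathcal{I}$-convergent to $0$, yields \ref{item:T5}. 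The only substantial point is \ref{item:T1}, for which the plan is to invoke the uniform boundedness principle twice. First, for each fixed $n$: since every $A_{n,k}$ is bounded, the partial-sum maps $\bm{x}\mapsto\sum_{k<N}A_{n,k}x_k$ are bounded operators on the Banach space $c_0(X)$, and they converge pointwise as $N\to\infty$ because $c_0(X)\subseteq c^b(X,\mathcal{I})\subseteq\mathrm{dom}(A)$; hence their norms stay bounded in $N$, which forces $\|A_{n,\omega}\|<\infty$ and shows that $T_n:c_0(X)\to Y$, $\bm{x}\mapsto A_n\bm{x}$, is bounded, with $\|T_n\|=\|A_{n,\omega}\|$ (evaluate $T_n$ on finitely supported sequences of vectors in $B_X$ for ``$\ge$'', and use the group-norm estimate for ``$\le$''). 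Second, over $n$: regularity gives $A\bm{x}\in c^b(Y,\mathcal{J})\subseteq\ell_\infty(Y)$, hence $\sup_n\|T_n\bm{x}\|<\infty$, for every $\bm{x}\in c_0(X)$, so the uniform boundedness principle yields $\sup_n\|A_{n,\omega}\|=\sup_n\|T_n\|<\infty$.

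\emph{Sufficiency.} Assuming \ref{item:T1}, \ref{item:T3}, \ref{item:T4}, \ref{item:T5}, set $H:=\sup_n\|A_{n,\omega}\|<\infty$. I would first record the elementary estimate $\|A_n\bm{z}\|\le\|\bm{z}\|_\infty\,\|A_{n,\omega}\|$, valid for every bounded $\bm{z}\in\mathrm{dom}(A)$ (pass to the limit in the finite partial sums after rescaling each $z_k$), which already shows that $A\bm{x}$ is a bounded sequence whenever $\bm{x}$ is bounded and lies in $\mathrm{dom}(A)$. Then, fixing $\bm{x}\in c^b(X,\mathcal{I})$ with $\mathcal{I}\text{-}\lim\bm{x}=\xi$, so that all sequences below lie in $\mathrm{dom}(A)$ by \ref{item:T3}, I would write $\bm{x}=\bm{u}+\bm{y}$ with $\bm{u}:=(\xi,\xi,\ldots)$ and $\bm{y}:=\bm{x}-\bm{u}\in c_0^b(X,\mathcal{I})$, and use linearity of $A_n$ together with \ref{item:T4} (and additivity of $\mathcal{J}$-limits) to reduce to proving $\mathcal{J}\text{-}\lim_n A_n\bm{y}=0$. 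For this, given $\varepsilon>0$, I would set $E:=\{k\in\omega:\|y_k\|>\varepsilon\}\in\mathcal{I}$ and let $\bm{y}'$ be $\bm{y}$ with the coordinates outside $E$ replaced by $0$, so $\bm{y}'\in c_{00}^b(X,\mathcal{I})$ and $\|\bm{y}-\bm{y}'\|_\infty\le\varepsilon$; then $\|A_n(\bm{y}-\bm{y}')\|\le\varepsilon H$ by the estimate and \ref{item:T1}, while $\{n:\|A_n\bm{y}'\|\ge\varepsilon\}\in\mathcal{J}$ by \ref{item:T5}, whence $\{n:\|A_n\bm{y}\|\ge\varepsilon(1+H)\}\in\mathcal{J}$; letting $\varepsilon\downarrow0$ gives $\mathcal{J}\text{-}\lim_n A_n\bm{y}=0$. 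This yields $\mathcal{J}\text{-}\lim_n A_n\bm{x}=T\xi$ and $A\bm{x}\in c^b(Y,\mathcal{J})$, hence $A\in(c^b(X,\mathcal{I}),c^b(Y,\mathcal{J}))$ and $\{A\}$ is $(\mathcal{I},\mathcal{J})$-regular with respect to $T$.

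\emph{Expected main obstacle.} Everything except \ref{item:T1} is routine manipulation of $\mathcal{I}$- and $\mathcal{J}$-limits together with the group-norm estimate. The delicate step is the necessity of \ref{item:T1}: the uniform boundedness principle cannot be applied over $n$ until one knows that each row induces a \emph{bounded} operator on $c_0(X)$, which itself needs a preliminary per-row uniform boundedness argument over the partial sums (using $A_{n,k}\in\mathcal{B}(X,Y)$), together with the observation that finitely supported unit sequences realize the group norm $\|A_{n,\omega}\|$ as the operator norm of $T_n$.
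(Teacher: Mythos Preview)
Your proof is correct. The paper does not give an argument at all: its proof of this theorem is the single sentence ``It follows by \cite[Theorem 2.5]{Leo22} in the case where each $A_{n,k}$ is a bounded linear operator (hence, item (T2) therein would be void).'' You acknowledge this option (citing Theorem~2.9 rather than~2.5 of the same paper) but then supply a self-contained direct argument instead.

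Your direct proof is sound in both directions. The sufficiency half is the standard $\varepsilon$-truncation: split $\bm{x}$ into its constant part and a $c_0^b(X,\mathcal{I})$ remainder, then approximate the latter by a $c_{00}^b(X,\mathcal{I})$ sequence up to error $\varepsilon H$; items \ref{item:T3}--\ref{item:T5} and the group-norm estimate handle the pieces. For necessity, \ref{item:T3}--\ref{item:T5} are immediate from test sequences, and your two-stage uniform boundedness argument for \ref{item:T1} is the right idea: bounded entries make the partial-sum maps $S_N:c_0(X)\to Y$ bounded, pointwise convergence on $c_0(X)\subseteq c^b(X,\mathcal{I})\subseteq\mathrm{dom}(A)$ gives $\|A_{n,\omega}\|=\sup_N\|S_N\|<\infty$ per row, and then $A\bm{x}\in c^b(Y,\mathcal{J})\subseteq\ell_\infty(Y)$ for all $\bm{x}\in c_0(X)$ lets you apply UBP over $n$.

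What each approach buys: the paper's route is short and keeps the proof of the operator Silverman--Toeplitz theorem in one place (the companion paper \cite{Leo22}), which is natural since this theorem is being quoted here only as input to Theorem~\ref{thm:uniformtoeplitz}. Your route makes the present paper self-contained on this point and isolates exactly where the boundedness hypothesis on the $A_{n,k}$ is used (namely, to launch the first UBP).
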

\begin{proof}
It follows by \cite[Theorem 2.5]{Leo22} in the case where each $A_{n,k}$ is a bounded linear operator (hence, item (T2) therein would be void). 
\end{proof}

We remark that equivalent versions of items \ref{item:T3} and \ref{item:T5} which depend only on the entries $A_{n,k}$ can be found in \cite{Leo25}. 

Using both Theorem \ref{thm:characterizationuniformconvergence} and Theorem \ref{thm:IJREGULARold}, we can characterize the $(\mathcal{I}, \mathcal{J})$-regularity of a family $\mathscr{A}$, provided that $\mathcal{J}$ is countably generated: 
\begin{thm}\label{thm:uniformtoeplitz}
Let $X,Y$ be Banach spaces, let $\mathscr{A}$ be a nonempty set of matrices of bounded linear operators in $\mathcal{B}(X,Y)$, and fix $T \in \mathcal{B}(X,Y)$. Let $\mathcal{I}, \mathcal{J}$ be ideals on $\omega$ such that $\mathcal{J}$ is countably generated. 

Then $\mathscr{A}$ is $(\mathcal{I}, \mathcal{J})$-regular with respect to $T$ if and only if\textup{:} 
\begin{enumerate}[label={\rm (\textsc{U}\arabic{*})}]
\item \label{item:U1} $\sup_n\|A_{n,\omega}\|<\infty$ for all $A \in \mathscr{A}$\textup{;}
\end{enumerate}
\begin{enumerate}[label={\rm (\textsc{U}\arabic{*})}] 
\setcounter{enumi}{1}
\item \label{item:U2} $\sup_{A \in \mathscr{A}} \sup_{n \in J}\|A_{n,\omega}\|<\infty$ for some $J \in \mathcal{J}^\star$\textup{;}
\end{enumerate}
\begin{enumerate}[label={\rm (\textsc{U}\arabic{*})}] 
\setcounter{enumi}{2}
\item \label{item:U3} $c^b(X,\mathcal{I})\subseteq \mathrm{dom}(A)$ for all $A \in \mathscr{A}$\textup{;}
\end{enumerate}
\begin{enumerate}[label={\rm (\textsc{U}\arabic{*})}] 
\setcounter{enumi}{3}
\item \label{item:U4} $\mathcal{J}\text{-}\lim_n \sum_{k}A_{n,k}=T$ uniformly on $A \in \mathscr{A}$\textup{;} 
\end{enumerate}
\begin{enumerate}[label={\rm (\textsc{U}\arabic{*})}] 
\setcounter{enumi}{4}
\item \label{item:U5} $\mathcal{J}\text{-}\lim A\bm{x}=0$ uniformly on $A \in \mathscr{A}$ for all $\bm{x} \in c_{00}^b(X,\mathcal{I})$\textup{.} 
\end{enumerate}
\end{thm}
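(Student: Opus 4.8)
The plan is to deduce Theorem~\ref{thm:uniformtoeplitz} from Theorem~\ref{thm:characterizationuniformconvergence} together with the single-matrix characterization in Theorem~\ref{thm:IJREGULARold}, by passing through the auxiliary family $\mathscr{B}_{\mathscr{A}}$. The key observation is that, since $\mathcal{J}$ is countably generated, Definition~\ref{defi:IJregTfamily} (applied coordinatewise to the $Y$-valued transformed sequences) fits exactly the uniform convergence notion of Definition~\ref{defi:uniform}; hence Theorem~\ref{thm:characterizationuniformconvergence} tells us that $\mathscr{A}$ is $(\mathcal{I},\mathcal{J})$-regular with respect to $T$ if and only if every $B \in \mathscr{B}_{\mathscr{A}}$ satisfies $\mathcal{J}\text{-}\lim B\bm{x} = T(\mathcal{I}\text{-}\lim\bm{x})$ for all $\bm{x} \in c^b(X,\mathcal{I})$, i.e. if and only if every $B \in \mathscr{B}_{\mathscr{A}}$ is individually $(\mathcal{I},\mathcal{J})$-regular with respect to $T$. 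To invoke Theorem~\ref{thm:characterizationuniformconvergence} one must check $Y$ is separable; since $Y$ is Banach and all the action takes place inside the closed separable subspace generated by the countably many vectors $\{A_{n,k}x_j\}$ arising from a fixed sequence, one may work in that subspace, or argue that the statement is about individual sequences and reduce accordingly — I would spell this reduction out carefully, as it is the one genuinely delicate point.

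Granting that reduction, the theorem becomes: $\mathscr{A}$ is $(\mathcal{I},\mathcal{J})$-regular with respect to $T$ if and only if \emph{every} $B \in \mathscr{B}_{\mathscr{A}}$ satisfies conditions \ref{item:T1}--\ref{item:T5} of Theorem~\ref{thm:IJREGULARold}. So it remains to show that the conjunction ``\ref{item:T1}--\ref{item:T5} hold for every $B \in \mathscr{B}_{\mathscr{A}}$'' is equivalent to the list \ref{item:U1}--\ref{item:U5}. I would prove the two implications separately. For the forward direction: \ref{item:U1} and \ref{item:U3} are immediate by taking, for each fixed $A \in \mathscr{A}$, the matrix $B \in \mathscr{B}_{\mathscr{A}}$ all of whose rows are rows of $A$ (but note \ref{item:T1}, \ref{item:T3} for such $B$ involve the row-norms of $A$, so one must combine different choices — still routine since every row of every $A$ occurs as a row of some $B$); \ref{item:U4} and \ref{item:U5} follow from \ref{item:T4}, \ref{item:T5} for arbitrary $B$ via Theorem~\ref{thm:characterizationuniformconvergence} again (the uniform version of a $\mathcal{J}$-limit statement over $\mathscr{A}$ is equivalent to the statement for all $B \in \mathscr{B}_{\mathscr{A}}$, applied with $\mathscr{A}$ replaced by the column sums $\sum_k A_{n,k}$ or by the sequences $A\bm{x}$). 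The condition \ref{item:U2} — uniform boundedness of row norms on a $\mathcal{J}^\star$-set, uniformly over the family — is the genuinely new ingredient: it must be extracted, and this is where the countable generation of $\mathcal{J}$ is used in an essential, non-formal way.

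For the reverse direction, assume \ref{item:U1}--\ref{item:U5}; I must verify \ref{item:T1}--\ref{item:T5} for an arbitrary $B \in \mathscr{B}_{\mathscr{A}}$. Conditions \ref{item:T4} and \ref{item:T5} for $B$ follow from \ref{item:U4} and \ref{item:U5} by the same equivalence from Theorem~\ref{thm:characterizationuniformconvergence} (uniform $\mathcal{J}$-convergence over $\mathscr{A}$ implies ordinary $\mathcal{J}$-convergence for each $B \in \mathscr{B}_{\mathscr{A}}$). Condition \ref{item:T3} for $B$ is clear since each row of $B$ is a row of some $A$ and \ref{item:U3} gives $c^b(X,\mathcal{I}) \subseteq \mathrm{dom}(A)$; moreover $\mathrm{dom}(B) \supseteq \bigcap_{\nu}\mathrm{dom}(A^\nu)$ row-by-row. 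The only nontrivial step is \ref{item:T1}: one needs $\sup_n \|B_{n,\omega}\| < \infty$ for every such $B$, and here the row index $n$ ranges over all of $\omega$ while the rows are drawn, in an arbitrary pattern, from matrices in $\mathscr{A}$. From \ref{item:U1} each individual matrix contributes finitely-supremized rows, but a diagonal-type $B$ could in principle pick rows with blowing-up norms from different matrices; this is precisely excluded by \ref{item:U2}, which guarantees a $\mathcal{J}^\star$-set $J$ on which all row norms over the whole family are bounded by a common constant, while off $J$ there are only ``$\mathcal{J}$-few'' indices, whose rows come from matrices satisfying \ref{item:U1} — but finitely-many-at-a-time boundedness is not enough there either, so I expect the correct argument to show $J$ can be taken to be \emph{all} of $\omega$ minus a set in $\mathcal{J}$ on which, using \ref{item:U3} and the uniform boundedness principle applied to the operators $A_n$ restricted to the separable subspace, the norms are still controlled. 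Sorting out this interplay — reconciling rowwise boundedness of each $A \in \mathscr{A}$ with a family-uniform bound valid for every $B \in \mathscr{B}_{\mathscr{A}}$ — is the main obstacle, and I would handle it by a careful application of the Banach--Steinhaus theorem to the family $\{A_n : A \in \mathscr{A},\ n \in \omega\}$ acting on $c_0^b(X,\mathcal{I})$ (or a suitable Banach subspace thereof), exactly as in the single-matrix proof of \cite[Theorem 2.5]{Leo22}, noting that the uniform-summability hypothesis makes that whole family pointwise bounded.
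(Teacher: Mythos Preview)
Your central reduction is flawed. You claim that Theorem~\ref{thm:characterizationuniformconvergence} yields ``$\mathscr{A}$ is $(\mathcal{I},\mathcal{J})$-regular with respect to $T$ if and only if every $B\in\mathscr{B}_{\mathscr{A}}$ is individually $(\mathcal{I},\mathcal{J})$-regular with respect to $T$'', and then plan to feed each $B$ into Theorem~\ref{thm:IJREGULARold}. But Theorem~\ref{thm:characterizationuniformconvergence} only transfers the uniform $\mathcal{J}$\nobreakdash-limit statement to the per-$B$ statement; it says nothing about $B\bm{x}$ being \emph{bounded}, which is part of Definition~\ref{defi:IJregTfamily} via the requirement $B\in(c^b(X,\mathcal{I}),c^{b}(Y,\mathcal{J}))$. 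In fact this boundedness can fail. Take $X=Y=\mathbf{R}$, $\mathcal{I}=\mathrm{Fin}$, and a countably generated $\mathcal{J}$ containing an infinite set $E=\{e_0<e_1<\cdots\}$ (e.g.\ $\mathcal{J}\cong\mathrm{Fin}\oplus\mathcal{P}(\omega)$). Let $A^\nu$ be the identity matrix except that its row $e_\nu$ is $\nu$ times the $e_\nu$-th unit row. One checks that \ref{item:U1}--\ref{item:U5} hold (with $J=\omega\setminus E$ in \ref{item:U2}), so by the theorem $\mathscr{A}$ is $(\mathcal{I},\mathcal{J})$-regular; yet the matrix $B\in\mathscr{B}_{\mathscr{A}}$ whose $e_\nu$-th row is taken from $A^\nu$ satisfies $\|B_{e_\nu,\omega}\|=\nu$, so \ref{item:T1} fails for $B$, and $B(1,1,\ldots)$ is unbounded, so $\{B\}$ is \emph{not} $(\mathcal{I},\mathcal{J})$-regular. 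Your proposed Banach--Steinhaus argument cannot rescue this, since the pointwise boundedness hypothesis is precisely what fails.

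The paper proceeds quite differently. In the \textsc{If} part it does not route through $\mathscr{B}_{\mathscr{A}}$ at all: it verifies \ref{item:T1}, \ref{item:T3}--\ref{item:T5} for each $A\in\mathscr{A}$ via Theorem~\ref{thm:IJREGULARold}, and then checks the uniform limit \eqref{eq:uniformAIJregular} directly (adapting the estimates of \cite[Theorem~2.5]{Leo22}, using \ref{item:U2} for the uniform bound). In the \textsc{Only If} part it uses only the implication \ref{item:1uniformcondition}$\Rightarrow$\ref{item:3uniformcondition} of Theorem~\ref{thm:characterizationuniformconvergence} (which, as noted there, does not require separability of $Y$ --- this also disposes of your separable-subspace workaround) to conclude merely that each $B\in\mathscr{B}_{\mathscr{A}}$ satisfies $B\in(c^b(X,\mathcal{I}),Y^\omega)$ together with the correct $\mathcal{J}$-limit. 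From this weaker conclusion the paper invokes \cite[Theorem~2.14]{Leo22} (not Theorem~\ref{thm:IJREGULARold}) to obtain, for each $B$, a bound $\sup_{n\in S_{t(B)}}\|B_{n,\omega}\|<\infty$ on some $\mathcal{J}^\star$-set $S_{t(B)}$; the heart of the proof is then a nontrivial combinatorial claim --- using the $F_\sigma$ structure of $\mathcal{J}$ via Mazur's submeasure representation --- that the map $B\mapsto t(B)$ is bounded, which is exactly what yields \ref{item:U2}. You correctly flagged \ref{item:U2} as the crux, but your outline contains no mechanism to produce it.
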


In the case where both $X$ and $Y$ are finite dimensional, we obtain a more explicit characterization. 
 For, suppose that $X=\mathbf{R}^d$ and $Y=\mathbf{R}^m$ for some integers $d,m \ge 1$. 
Then, if $\mathscr{A}=\{A^\nu: \nu<\kappa\}$ is an arbitrary family, each linear operator $A^\nu_{n,k}$ is represented by the real matrix $[a^\nu_{n,k}(i,j): 1\le i\le m, 1\le j\le d\,]$ and $T$ is represented by the real matrix $[t(i,j): 1\le i\le m, 1\le j\le d\,]$; of course, $T$ and all $A_{n,k}^\nu$ are continuous. 

\begin{thm}\label{cor:maincharacterizationIJregular}
Suppose that $X=\mathbf{R}^d$ and $Y=\mathbf{R}^m$. Let $\mathscr{A}=\{A^\nu: \nu<\kappa\}$ be a nonempty set of matrices of linear operators in $\mathcal{B}(X,Y)$ and fix a linear operator $T \in \mathcal{B}(X,Y)$. Let also $\mathcal{I}, \mathcal{J}$ be ideal on $\omega$ such that $\mathcal{J}$ is countably generated. 

Then $\mathscr{A}$ is $(\mathcal{I}, \mathcal{J})$-regular with respect to $T$ if and only if\textup{:}
\begin{enumerate}[label={\rm (\textsc{D}\arabic{*})}]
\item \label{item:D1} $\sup_n\sum_k \sum_{i,j} |a_{n,k}^\nu (i,j)|<\infty$ for each $\nu$\textup{;}
\item \label{item:D2} $\sup_\nu \sup_{n \in J} \sum_k \sum_{i,j} |a^\nu_{n,k} (i,j)|<\infty$ for some $J \in \mathcal{J}^\star$\textup{;}
\item \label{item:D3} 
$\mathcal{J}\text{-}\lim_n \sum_k a^\nu_{n,k}(i,j)=t(i,j)$ uniformly on $\nu$, for all $1\le i\le m$ and $1\le j\le d$\textup{;}
\item \label{item:D4} $\mathcal{J}\text{-}\lim_n \sum_{k \in E}a^\nu_{n,k}(i,j)=0$ uniformly on $\nu$,  for all $1\le i\le m$, $1\le j\le d$, and 
$E \in \mathcal{I}$\textup{.} 
\end{enumerate}
\end{thm}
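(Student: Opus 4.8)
The plan is to derive the statement from Theorem~\ref{thm:uniformtoeplitz} by identifying linear operators on $\mathbf{R}^d$ and $\mathbf{R}^m$ with their matrices, and then translating each of the abstract conditions \ref{item:U1}--\ref{item:U5} into the corresponding entrywise condition among \ref{item:D1}--\ref{item:D4}. Fix, once and for all, arbitrary norms on $\mathbf{R}^d$ and $\mathbf{R}^m$; the statement does not depend on this choice since all norms on a finite-dimensional space are equivalent. Because $X,Y$ are finite dimensional one has $\mathcal{L}(X,Y)=\mathcal{B}(X,Y)$ and all operators are continuous, so Theorem~\ref{thm:uniformtoeplitz} applies to $\mathscr{A}$ and $T$, and it suffices to prove that \ref{item:U1}--\ref{item:U5} together are equivalent to \ref{item:D1}--\ref{item:D4}.

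The first ingredient is the elementary two-sided estimate: there are constants $0<c\le C$, depending only on $d,m$ and the chosen norms, with
\[
c\sum\nolimits_{k\in E}\sum\nolimits_{i,j}|a^\nu_{n,k}(i,j)|\ \le\ \|A^\nu_{n,E}\|\ \le\ C\sum\nolimits_{k\in E}\sum\nolimits_{i,j}|a^\nu_{n,k}(i,j)|
\]
for all $n,\nu$ and all $E\subseteq\omega$ (the right-hand inequality is the triangle inequality; the left one follows by testing the group norm against the standard basis vectors of $\mathbf{R}^d$). Applying this with $E=\omega$ shows that $\sup_n\|A^\nu_{n,\omega}\|<\infty$ is equivalent to \ref{item:D1} and $\sup_\nu\sup_{n\in J}\|A^\nu_{n,\omega}\|<\infty$ is equivalent to \ref{item:D2}, hence \ref{item:U1}$\Leftrightarrow$\ref{item:D1} and \ref{item:U2}$\Leftrightarrow$\ref{item:D2}. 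Moreover \ref{item:D1} forces each series $\sum_k\|A^\nu_{n,k}\|$ to converge, so every row of $A^\nu$ is absolutely summable and $A^\nu\bm{x}$ is well defined for each bounded $\bm{x}$; in particular $c^b(X,\mathcal{I})\subseteq\mathrm{dom}(A^\nu)$, so \ref{item:D1} implies \ref{item:U3} (while \ref{item:U3} is in any case part of the conclusion of Theorem~\ref{thm:uniformtoeplitz}, hence may be dropped from the explicit list). For \ref{item:U4}$\Leftrightarrow$\ref{item:D3}, recall that on a finite-dimensional space a net of operators converges in the strong operator topology exactly when all its matrix entries converge, and that the sets $\{S:|S(i,j)-t(i,j)|<\varepsilon\text{ for all }i,j\}$ form a neighbourhood base of $T$ in $\mathcal{B}(X,Y)$; since there are only $md$ entries and $\mathcal{J}^\star$ is closed under finite intersections, the uniform (in $\nu$) operator-valued $\mathcal{J}$-limit of \ref{item:U4} is equivalent to the uniform coordinatewise $\mathcal{J}$-limits of \ref{item:D3} (here one uses \ref{item:U3} to know that $\sum_k A^\nu_{n,k}$ is a bona fide operator).

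The remaining, and central, point is \ref{item:U5}$\Leftrightarrow$\ref{item:D4}. The implication \ref{item:U5}$\Rightarrow$\ref{item:D4} is immediate: for $E\in\mathcal{I}$ and $1\le j\le d$, the sequence $\bm{x}$ with $x_k:=e_j$ for $k\in E$ and $x_k:=0$ otherwise belongs to $c_{00}^b(X,\mathcal{I})$, and the $i$-th coordinate of $A^\nu_n\bm{x}=\sum_{k\in E}A^\nu_{n,k}e_j$ is $\sum_{k\in E}a^\nu_{n,k}(i,j)$, so \ref{item:U5} for this $\bm{x}$ yields \ref{item:D4} for $(i,j,E)$. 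For the converse, assume \ref{item:D1}, \ref{item:D2} and \ref{item:D4}, fix $\bm{x}\in c_{00}^b(X,\mathcal{I})$ supported on some $E\in\mathcal{I}$ with $\|x_k\|\le M$, and fix $\varepsilon>0$. Let $J\in\mathcal{J}^\star$ witness \ref{item:D2}, so that $C':=\sup_\nu\sup_{n\in J}\sum_k\|A^\nu_{n,k}\|<\infty$ by the estimate above. Cover the bounded set $\{x_k:k\in E\}$ by finitely many sets of diameter $\le\delta$; this splits $E$ into finitely many pieces $E^{(1)},\dots,E^{(p)}\in\mathcal{I}$ and provides points $\bm{c}_1,\dots,\bm{c}_p$ with $\|x_k-\bm{c}_r\|\le\delta$ whenever $k\in E^{(r)}$. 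Then, for every $n\in J$ and every $\nu$,
\[
\Bigl\|\sum\nolimits_{k\in E}A^\nu_{n,k}x_k-\sum\nolimits_{r\le p}\Bigl(\sum\nolimits_{k\in E^{(r)}}A^\nu_{n,k}\Bigr)\bm{c}_r\Bigr\|\ \le\ \delta\sum\nolimits_{k\in E}\|A^\nu_{n,k}\|\ \le\ \delta C'.
\]
By \ref{item:D4} each operator $\sum_{k\in E^{(r)}}A^\nu_{n,k}$ has all its entries $\mathcal{J}$-converging to $0$ uniformly in $\nu$; since there are finitely many $r$ (and finitely many entries), there is $S\in\mathcal{J}^\star$ such that $\bigl\|\sum_{r\le p}(\sum_{k\in E^{(r)}}A^\nu_{n,k})\bm{c}_r\bigr\|<\varepsilon$ for all $n\in S$ and all $\nu$. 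Choosing $\delta:=\varepsilon/(C'+1)$ yields $\|\sum_{k\in E}A^\nu_{n,k}x_k\|<2\varepsilon$ for all $n\in S\cap J\in\mathcal{J}^\star$ and all $\nu$, which is \ref{item:U5}. Assembling these translations with Theorem~\ref{thm:uniformtoeplitz} gives the claimed equivalence.

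I expect the only real obstacle to be the implication \ref{item:D4}$\Rightarrow$\ref{item:U5}: condition \ref{item:D4} only controls the \emph{signed} partial sums of the matrix entries over sets of $\mathcal{I}$, whereas \ref{item:U5} concerns arbitrary bounded sequences, so one must bridge the gap by a step-function approximation; the role of \ref{item:D2} is precisely to bound the resulting approximation error uniformly in $\nu$ on a co-$\mathcal{J}$-small set of rows, which is all that matters for a $\mathcal{J}$-limit. Everything else is bookkeeping with the group-norm estimate and with finite intersections in the dual filter $\mathcal{J}^\star$.
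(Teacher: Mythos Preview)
Your proof is correct, and for the boundedness conditions \ref{item:U1}--\ref{item:U4} it matches the paper's argument essentially line by line (the paper fixes the $1$-norm to get the group-norm identity exactly, you allow an arbitrary norm and carry harmless constants, which is equivalent). The genuine divergence is in the treatment of \ref{item:U5}$\Leftrightarrow$\ref{item:D4}.

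The paper does not argue directly. It first invokes Theorem~\ref{thm:characterizationuniformconvergence} to replace the uniform condition \ref{item:U5} by the corresponding condition for each single matrix $B\in\mathscr{B}_{\mathscr{A}}$, then appeals to an extreme-point description of the unit ball of $\ell_\infty(\mathbf{R}^d)$ together with a cited result (\cite[Corollary~2.6]{Leo25}) to reduce to sequences with values in $\{\pm e_j\}$, and finally applies Theorem~\ref{thm:characterizationuniformconvergence} a second time to return to a uniform statement. This keeps the argument within the philosophy of the paper---deduce uniform results from single-matrix results via Theorem~\ref{thm:characterizationuniformconvergence}---at the cost of several external citations.

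Your route is more elementary and entirely self-contained: approximate the bounded sequence $\bm{x}$ supported on $E\in\mathcal{I}$ by a step function with finitely many levels, use \ref{item:D2} to bound the approximation error uniformly in $\nu$ on a set $J\in\mathcal{J}^\star$, and use \ref{item:D4} on each of the finitely many pieces $E^{(r)}\in\mathcal{I}$. This avoids both the extreme-point machinery and any appeal to Theorem~\ref{thm:characterizationuniformconvergence}. The price is that you must explicitly invoke \ref{item:D2} inside the proof of \ref{item:D4}$\Rightarrow$\ref{item:U5}; this is legitimate since you are proving the conjunction of \ref{item:D1}--\ref{item:D4} equivalent to the conjunction of \ref{item:U1}--\ref{item:U5}, but it means the individual equivalence \ref{item:U5}$\Leftrightarrow$\ref{item:D4} is not established in isolation (nor does it need to be).
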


We remark that the proof of the above corollary in the case where $\mathscr{A}$ is a singleton can be deduced also from \cite[Corollary 2.11]{Leo22}. 

In particular, in the case $d=m=1$ and $T$ is the identity operator $I$, we obtain the ideal version of the uniform Silverman--Toeplitz theorem given by Bell in \cite[Theorem 2]{MR310489}. 
We provide below this special case for future references (here, we write $a^\nu_{n,k}:=a^\nu_{n,k}(1,1)$ for all $n,k \in \omega$ and $\nu<\kappa$): 
\begin{cor}\label{cor:maincorollaryuniform}
Suppose that $X=Y=\mathbf{R}$ and let $\mathcal{I}$, $\mathcal{J}$ be ideals on $\omega$ such that $\mathcal{J}$ is countably generated. 
Let also $\mathscr{A}=\{A^\nu: \nu<\kappa\}$ be a 
family of infinite real matrices. 

Then $\mathscr{A}$ is $(\mathcal{I}, \mathcal{J})$-regular with respect to $I$ if and only if\textup{:}
\begin{enumerate}[label={\rm (\textsc{M}\arabic{*})}]
\item \label{item:M1new} $\sup_n\sum_k |a^\nu_{n,k}|<\infty$ for each $\nu$\textup{;}
\item \label{item:M1new} $\sup_\nu \sup_{n \in J}\sum_k |a^\nu_{n,k}|<\infty$ for some $J \in \mathcal{J}^\star$\textup{;}
\item \label{item:M3new} 
$\mathcal{J}\text{-}\lim_n \sum_k a^\nu_{n,k}=1$ uniformly on $\nu$\textup{;}
\item \label{item:M4new} $\mathcal{J}\text{-}\lim_n \sum_{k \in E}a^\nu_{n,k}=0$ uniformly on $\nu$, for all $E \in \mathcal{I}$\textup{.} 
\end{enumerate}
\end{cor}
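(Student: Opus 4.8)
The plan is to deduce Corollary \ref{cor:maincorollaryuniform} directly from Theorem \ref{cor:maincharacterizationIJregular} by setting $d=m=1$ and $T=I$, the identity on $\mathbf{R}$. With that substitution there is only one index pair $(i,j)=(1,1)$, so $a^\nu_{n,k}(1,1)=a^\nu_{n,k}$ and $t(1,1)=1$. Under this dictionary, condition \ref{item:D1} becomes $\sup_n\sum_k|a^\nu_{n,k}|<\infty$ for each $\nu$, which is precisely \ref{item:M1new} (the first one); condition \ref{item:D2} becomes $\sup_\nu\sup_{n\in J}\sum_k|a^\nu_{n,k}|<\infty$ for some $J\in\mathcal{J}^\star$, which is the second \ref{item:M1new}; condition \ref{item:D3} becomes $\mathcal{J}\text{-}\lim_n\sum_k a^\nu_{n,k}=1$ uniformly on $\nu$, which is \ref{item:M3new}; and condition \ref{item:D4} becomes $\mathcal{J}\text{-}\lim_n\sum_{k\in E}a^\nu_{n,k}=0$ uniformly on $\nu$ for all $E\in\mathcal{I}$, which is \ref{item:M4new}. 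Since $X=\mathbf{R}$ is one-dimensional, every linear operator $\mathbf{R}\to\mathbf{R}$ is bounded, so the hypothesis $A^\nu_{n,k}\in\mathcal{B}(X,Y)$ and $T\in\mathcal{B}(X,Y)$ of Theorem \ref{cor:maincharacterizationIJregular} is automatically met, and the countable generation of $\mathcal{J}$ is assumed in both statements.

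Concretely, I would write: ``This is the special case $d=m=1$ and $T=I$ of Theorem \ref{cor:maincharacterizationIJregular}. Indeed, identifying each operator with the corresponding real number as indicated before the statement, conditions \ref{item:D1}--\ref{item:D4} specialize to \ref{item:M1new}--\ref{item:M4new}, respectively.'' That is essentially the whole proof; there is no real obstacle here, since all the work has been done in the preceding theorems. The only thing to be a little careful about is confirming that the ``uniformly on $\nu$'' quantifiers match up verbatim and that the ranges of $E$ (over $\mathcal{I}$) and of $J$ (over $\mathcal{J}^\star$) are copied correctly — a purely bookkeeping check.

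If one instead wanted a self-contained argument not routing through the $\mathbf{R}^d\to\mathbf{R}^m$ version, the alternative would be to apply Theorem \ref{thm:uniformtoeplitz} with $X=Y=\mathbf{R}$ and then observe that for real matrices the group norm reduces to $\|A^\nu_{n,\omega}\|=\sum_k|a^\nu_{n,k}|$ (as noted in the text), that $c^b(\mathbf{R},\mathcal{I})\subseteq\mathrm{dom}(A^\nu)$ is implied by \ref{item:M1new} together with a standard Abel-summation estimate, and that testing \ref{item:U5} on the sequences $\bm{x}=\mathbf{1}_{\{k\}}$ and on bounded sequences supported on members of $\mathcal{I}$ yields exactly \ref{item:M4new}. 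But the cleaner route is the one-line specialization of Theorem \ref{cor:maincharacterizationIJregular}, so that is the proof I would record.
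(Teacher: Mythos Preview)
Your proposal is correct and matches the paper's own proof essentially verbatim: the paper simply writes ``This follows by Theorem \ref{cor:maincharacterizationIJregular} by setting $d=m=1$.'' Your added remarks spelling out the dictionary between \ref{item:D1}--\ref{item:D4} and \ref{item:M1new}--\ref{item:M4new} are accurate elaborations of that one-line specialization.
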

Note that the case where $\mathcal{I}=\mathcal{J}=\mathrm{Fin}$ and $\mathscr{A}$ is a singleton corresponds to the classical Silverman--Toeplitz theorem, see e.g. \cite[Chapter 7, Theorem 3]{MR0390692}. 

As another application, we characterize the class of matrices which maps every bounded sequences uniformly to zero. 
\begin{cor}\label{cor:maptozero}
Suppose that $X=\mathbf{R}^d$ and $Y=\mathbf{R}^m$. Let $\mathscr{A}=\{A^\nu: \nu<\kappa\}$ be a nonempty family of matrices of linear operators in $\mathcal{B}(X,Y)$. Let also $\mathcal{J}$ be a countably generated ideal on $\omega$. 

Then $\mathscr{A}\subseteq (\ell_\infty(X), c_0^b(Y))$ and satisfies
\begin{equation}\label{eq:uniformconditionwrgwg}
\forall \bm{x}\in \ell_\infty(X), \quad 
\mathcal{J}\text{-}\lim A\bm{x}=0 \quad \text{ uniformly on }\nu
\end{equation}
if and only if items \ref{item:D1} and \ref{item:D2} hold, together with\textup{:}
\begin{enumerate}[label={\rm (\textsc{D}\arabic{*}$^\sharp$)}] 
\setcounter{enumi}{2}
\item 
\label{item:D3sharp}
$\mathcal{J}\text{-}\lim_n \sum_k |a^\nu_{n,k}(i,j)|=0$ uniformly on $\nu$, for all $1\le i\le m$ and $1\le j\le d$\textup{.}
\end{enumerate}
\end{cor}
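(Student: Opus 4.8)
The plan is to deduce Corollary~\ref{cor:maptozero} from Theorem~\ref{thm:characterizationuniformconvergence} together with the non-uniform ($\kappa=1$) description of the matrices sending $\ell_\infty$ into $c_0$, in exactly the way Theorem~\ref{thm:uniformtoeplitz} is deduced from Theorem~\ref{thm:IJREGULARold}. The first step would be a reduction to the scalar case $d=m=1$: in finite dimensions the group norm $\|A^\nu_{n,\omega}\|$ is comparable, with constants depending only on $d$ and $m$, to $\sum_k\sum_{i,j}|a^\nu_{n,k}(i,j)|$, so each of \ref{item:D1}, \ref{item:D2}, \ref{item:D3sharp} is the conjunction over the pairs $(i,j)$ of its scalar analogue for the real matrix $[\,a^\nu_{n,k}(i,j):n,k\in\omega\,]$; and evaluating \eqref{eq:uniformconditionwrgwg} on sequences supported on a single coordinate of $\mathbf{R}^d$, together with the bound $|(A^\nu_n\bm{x})(i)|\le\|\bm{x}\|_\infty\sum_j\sum_k|a^\nu_{n,k}(i,j)|$ for the reverse direction, shows that the left-hand side of the corollary splits over the pairs $(i,j)$ as well. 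Thus it is enough to prove the statement for $X=Y=\mathbf{R}$, which I assume henceforth, writing $r^\nu_n:=\sum_k|a^\nu_{n,k}|\in[0,+\infty]$.

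Second, I would dispose of the two boundedness requirements. By definition, the membership of each $A^\nu$ in $(\ell_\infty(\mathbf{R}),c_0^b(\mathbf{R}))$ means that $\ell_\infty(\mathbf{R})\subseteq\mathrm{dom}(A^\nu)$ and that each $A^\nu\bm{x}$, with $\bm{x}$ bounded, is bounded and $\mathcal{J}$-convergent to $0$; applying the uniform boundedness principle to the functionals $\bm{x}\mapsto\sum_k a^\nu_{n,k}x_k$ on the Banach space $\ell_\infty(\mathbf{R})$ shows that ``$\ell_\infty(\mathbf{R})\subseteq\mathrm{dom}(A^\nu)$ and every $A^\nu\bm{x}$ bounded'' amounts exactly to $\sup_n r^\nu_n<\infty$, i.e.\ to \ref{item:D1}. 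In particular, both the left-hand side of the corollary and condition \ref{item:D1} force $\ell_\infty(\mathbf{R})\subseteq\mathrm{dom}(A^\nu)$ for all $\nu$, hence $\ell_\infty(\mathbf{R})\subseteq\mathrm{dom}(\mathscr{A})$. Since $\mathbf{R}$ is separable and $\mathcal{J}$ is countably generated, Theorem~\ref{thm:characterizationuniformconvergence} then applies to every $\bm{x}\in\ell_\infty(\mathbf{R})$, and, using the clause $\eta_1=\eta_2$, it gives that $\bm{x}$ is uniformly $\mathscr{A}$-summable to $0$ if and only if $\mathcal{J}\text{-}\lim B\bm{x}=0$ for all $B\in\mathscr{B}_\mathscr{A}$. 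As each $A^\nu$ lies in $\mathscr{B}_\mathscr{A}$ and boundedness of the transformed sequences has already been accounted for, the left-hand side of the corollary becomes equivalent to the conjunction of ``$\sup_n r^\nu_n<\infty$ for every $\nu$'' with ``$\mathcal{J}\text{-}\lim B\bm{x}=0$ for every $B\in\mathscr{B}_\mathscr{A}$ and every $\bm{x}\in\ell_\infty(\mathbf{R})$''.

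For the second of these clauses I would use the non-uniform fact: for a real matrix $B$ whose rows lie in $\ell^1$, $\mathcal{J}\text{-}\lim B\bm{x}=0$ for every $\bm{x}\in\ell_\infty(\mathbf{R})$ if and only if $\mathcal{J}\text{-}\lim_n\sum_k|b_{n,k}|=0$. The implication ``$\Leftarrow$'' is immediate from $|\sum_k b_{n,k}x_k|\le\|\bm{x}\|_\infty\sum_k|b_{n,k}|$; the implication ``$\Rightarrow$'' is the $\kappa=1$ instance of the corollary and belongs to the $\ell_\infty$-to-$c_0$ matrix theory — it is classical for $\mathcal{J}=\mathrm{Fin}$, and for general countably generated $\mathcal{J}$ it rests on a gliding-hump construction in which the generators of $\mathcal{J}$ are used to keep the constructed set of ``bad'' rows outside $\mathcal{J}$ (cf.\ \cite{Leo22,Leo25}). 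In the present setting every $B\in\mathscr{B}_\mathscr{A}$ has rows in $\ell^1$, since $\ell_\infty(\mathbf{R})\subseteq\mathrm{dom}(B)$, so the clause in question is equivalent to ``$\mathcal{J}\text{-}\lim_n\sum_k|b_{n,k}|=0$ for every $B\in\mathscr{B}_\mathscr{A}$''. Now the row sequences of the matrices in $\mathscr{B}_\mathscr{A}$ are precisely the sequences $(r^{\nu(n)}_n)_n$ obtained by choosing one index $\nu(n)$ for each $n$, so this is in turn equivalent to $\mathcal{J}\text{-}\lim_n\sup_\nu r^\nu_n=0$ — if the supremum is $\mathcal{J}$-null then every selection is dominated by it, whereas if it is not one may choose, on a set outside $\mathcal{J}$, indices $\nu(n)$ with $r^{\nu(n)}_n$ bounded away from $0$ — and $\mathcal{J}\text{-}\lim_n\sup_\nu r^\nu_n=0$ is merely a restatement of \ref{item:D3sharp}.

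Putting the equivalences together, the left-hand side of the corollary is equivalent to \ref{item:D1} and \ref{item:D3sharp}. Finally, \ref{item:D2} follows from \ref{item:D3sharp}: the uniform statement \ref{item:D3sharp}, applied with $\varepsilon=1$, provides $J\in\mathcal{J}^\star$ on which $\sum_k|a^\nu_{n,k}|<1$ for all $n\in J$ and all $\nu$, which is \ref{item:D2}; one keeps \ref{item:D2} in the list only to mirror Theorem~\ref{cor:maincharacterizationIJregular}. This proves the corollary for $X=Y=\mathbf{R}$, and by the first paragraph the general case follows. The one step I expect to be genuinely delicate is the ``$\Rightarrow$'' half of the non-uniform fact of the third paragraph, namely the $\ell_\infty$-to-$c_0$ gliding hump performed relative to a countably generated ideal; all the other steps are routine bookkeeping or a direct appeal to Theorem~\ref{thm:characterizationuniformconvergence}.
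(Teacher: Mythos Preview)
Your proof is correct, and it rests on the same two pillars as the paper's---Theorem~\ref{thm:characterizationuniformconvergence} to pass between the uniform condition on $\mathscr{A}$ and the pointwise condition on every $B\in\mathscr{B}_\mathscr{A}$, and the single-matrix characterization of $(\ell_\infty,c_0(\mathcal{J}))$ from \cite[Corollary~2.12]{Leo22}---but you organize them more directly. The paper first embeds the statement into the $(\mathcal{I},\mathcal{J})$-regularity framework by taking $\mathcal{I}$ maximal (so $c^b(X,\mathcal{I})=\ell_\infty(X)$) and $T=0$, and then invokes Theorem~\ref{cor:maincharacterizationIJregular} to handle the \textsc{If} direction and to extract \ref{item:D1} and \ref{item:D2}; for \ref{item:D3sharp} it applies \cite[Corollary~2.12]{Leo22} to each $B\in\mathscr{B}_\mathscr{A}$ and, in order to upgrade ``$\mathcal{J}\text{-}\lim_n\sum_k|b_{n,k}(i,j)|=0$ for every $B$'' to the uniform statement, splits each entry into its positive and negative parts so that Theorem~\ref{thm:characterizationuniformconvergence} can be applied again to the \emph{linear} families $\{(A^\nu)^{\pm}\}$ against the constant sequence. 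You bypass both detours: \ref{item:D1} comes straight from the $(\ell_\infty,\ell_\infty)$ description, \ref{item:D2} is observed to be redundant once \ref{item:D3sharp} holds, and the passage from ``every $B$'' to ``uniformly in $\nu$'' is done by an elementary selection on the nonlinear row norms $r_n^\nu=\sum_k|a^\nu_{n,k}|$---effectively the relevant instance of Theorem~\ref{thm:characterizationuniformconvergence} proved by hand, with no need for the $\pm$ decomposition. The paper's route reuses machinery already in place; yours is shorter and makes explicit that \ref{item:D2} carries no additional content here.
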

The special case where $d=m=1$, $\mathcal{J}=\mathrm{Fin}$, and $\mathscr{A}$ countably infinite can be found in \cite[Theorem 4]{MR310489}. A characterization of the matrix class $(\ell_\infty(X), c_0^b(Y))$ (hence, the case where $\mathscr{A}$ is a singleton) can be found in \cite[Corollary 2.12]{Leo22}, which in turn extends \cite[Lemma 3.2]{MR2209588} in the real case for a particular choice of $\mathcal{J}$.

\subsection{$\sigma$-means and almost convergence} 
Let $X$ be a Banach space and fix an injective map $\sigma: \omega\to \omega$ without periodic points (i.e., it satisfies $\sigma^{p+1}(n) \neq n$ for all $n,p \in \omega$). Let $\mathscr{F}^\sigma=\{F^{\sigma,\nu}: \nu\in \omega\}$ be the family of matrices of bounded linear operators in $\mathcal{B}(X,X)$ defined as follows: for all $n,k,\nu \in \omega$, set 
$$
F^{\sigma,\nu}_{n,k}:=\frac{I}{n+1} \quad \text{ if }k \in \{\sigma(\nu), \sigma(\nu+1), \ldots, \sigma(\nu+n)\}
$$
and $F^{\sigma,\nu}_{n,k}:=0$ otherwise (recall that $I$ stands for the identity operator on $X$, so that $F^{\sigma,\nu}_n\bm{x}=\sum_{i=0}^n x_{\sigma(\nu+i)}/(n+1)$ for all $n,\nu \in \omega$ and all sequences $\bm{x} \in X^\omega$). 

\begin{defi}\label{defi:ac}
Let $X$ and $\sigma$ be as above. Let also $\mathcal{I}$ be an ideal on $\omega$. A sequence $\bm{x} \in X^\omega$ is said to be $(\mathcal{I},\sigma)$\emph{-convergent} to a vector $\eta \in X$, shortened as 
$$
(\mathcal{I}, \sigma)\text{-}\lim \bm{x}=\eta,
$$ 
if $\mathcal{I}\text{-}\lim \mathscr{F}\bm{x}=\eta$. If $\mathcal{I}=\mathrm{Fin}$, we simply write $\sigma\text{-}\lim \bm{x}=\eta$. 

The space of bounded $(\mathcal{I},\sigma)$-convergent sequences is denoted by  $V_{\mathcal{I},\sigma}(X)$. 
\end{defi}
If $X=\mathbf{R}$ and $\mathcal{I}=\mathrm{Fin}$, this notion coincides with $\sigma$-convergence studied e.g. by Raimi \cite{MR154005} and Schaefer \cite{MR306763}. If, in addition, one chooses 
\begin{equation}\label{eq:sigma0}
\forall n \in \omega, \quad \sigma_0(n):=n+1,
\end{equation}
then $V_{\mathrm{Fin},\sigma_0}(\mathbf{R})$ coincides with the space of almost convergent sequences (it is usually denoted by $\mathrm{ac}$); see also \cite{MR235340} for almost convergence of the transformed sequences $A\bm{x}$. In a Banach space setting, the space $V_{\mathrm{Fin},\sigma_0}(X)$ appears also in \cite[Definition 1.1]{MR4042592}. 

In addition, if $\mathcal{I}$ is a suitable copy of the Fubini product $\emptyset \times \mathrm{Fin}$ (which is a countably generated ideal), then $\mathcal{I}$-convergence can be seen as the Pringsheim convergence of double sequences $(x_{j,k}: j,k \in \omega)$, see \cite[Section 4.2]{MR3955010}. Accordingly, adapting the definition of $\mathscr{F}$ above, the notion of $(\mathcal{I},\sigma)$-convergence coincides with almost convergence of double sequences introduced by M\'{o}ricz and Rhoades in \cite{MR948914}.

It is worth noting that $\sigma$-convergence cannot be rewritten as $\mathcal{J}$-convergence, for any ideal $\mathcal{J}$ on $\omega$. Indeed, set $x:=(1,0,1,0,\ldots)$. Then $\sigma\text{-}\lim x=\nicefrac{1}{2}$ while $\mathcal{J}\text{-}\lim x$ is either $0$ or $1$ or it does not exist, for every ideal $\mathcal{J}$.

\begin{defi}\label{defi:almostregular}
Let $X,Y$ be Banach spaces. Let also $\mathcal{I}, \mathcal{J}$ be ideals on $\omega$ and fix an injection $\sigma: \omega \to \omega$ without periodic points. A matrix $A=(A_{n,k})$ of linear operators in $\mathcal{L}(X,Y)$ is said to be $(\mathcal{I}, \mathcal{J}, \sigma)$\emph{-almost regular with respect to $T \in \mathcal{L}(X,Y)$} if 
$$
A \in (c^b(X,\mathcal{I}), V_{\mathcal{J},\sigma}(Y))\quad \text{ and }\quad 
(\mathcal{J},\sigma)\text{-}\lim A\bm{x}=T(\mathcal{I}\text{-}\lim \bm{x})
\,\text{ for all }\bm{x} \in c^b(X,\mathcal{I}).
$$
\end{defi}
In the case where $X=Y=\mathbf{R}$, $\mathcal{I}=\mathcal{J}=\mathrm{Fin}$, and $T=I$, real matrices $A$ which satisfy the above notion have been called \textquotedblleft$\sigma$-regular\textquotedblright\,by Schaefer in \cite[Definition 2]{MR306763}. If, in addition, $\sigma=\sigma_0$ as defined in \eqref{eq:sigma0} then they are commonly known as \textquotedblleft almost regular,\textquotedblright\, see e.g. \cite{MR310489} or \cite[Definition 2.3]{MR201872}. 

A closer look at Definition \ref{defi:almostregular} reveals that a matrix $A$ is $(\mathcal{I}, \mathcal{J}, \sigma)$-almost regular with respect to $T$ if and only if the family $\mathscr{A}:=\{F^{\sigma,\nu}A: \nu\in \omega\}$ is $(\mathcal{I}, \mathcal{J})$-regular with respect to $T$, according to Definition \ref{defi:IJregTfamily}. 

Hence, using Theorem \ref{thm:uniformtoeplitz}, we obtain immediately a characterization of $(\mathcal{I}, \mathcal{J}, \sigma)$-almost regularity in a Banach space setting (we leave details to the interested reader). 
However, in the finite dimension, the latter characterization simplifies: 
\begin{thm}\label{thm:kingmultidimensional}
Suppose that $X=\mathbf{R}^d$ and $Y=\mathbf{R}^m$, and let $A=(A_{n,k})$ be a matrix of bounded linear operators in $\mathcal{B}(X,Y)$. Fix also $T \in \mathcal{B}(X,Y)$, an injection $\sigma: \omega\to \omega$ 
without periodic points, 
and let $\mathcal{I}, \mathcal{J}$ be ideals on $\omega$ with $\mathcal{J}$ countably generated. 


Then $A$ is $(\mathcal{I}, \mathcal{J}, \sigma)$-almost regular with respect to $T$ if and only if\textup{:}
\begin{enumerate}[label={\rm (\textsc{K}\arabic{*})}]
\item \label{item:K1} $\sup_n\sum_k \sum_{i,j} |a_{n,k}(i,j)|<\infty$\textup{;}
\end{enumerate}
\begin{enumerate}[label={\rm (\textsc{K}\arabic{*})}]
\setcounter{enumi}{1}
\item \label{item:K2} 
$(\mathcal{J},\sigma)\text{-}\lim_n \sum_k a_{n,k}(i,j)=t(i,j)$, for all $1\le i\le m$ and $1\le j\le d$\textup{;}
\end{enumerate}
\begin{enumerate}[label={\rm (\textsc{K}\arabic{*})}]
\setcounter{enumi}{2}
\item \label{item:K3} $(\mathcal{J},\sigma)\text{-}\lim_n \sum_{k \in E}a_{n,k}(i,j)=0$, for all $1\le i\le m$, $1\le j\le d$, and 
$E \in \mathcal{I}$\textup{.} 
\end{enumerate}
\end{thm}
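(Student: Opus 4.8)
The plan is to deduce this from Theorem \ref{thm:uniformtoeplitz} applied to the family $\mathscr{A}:=\{F^{\sigma,\nu}A:\nu\in\omega\}$, using the observation (already recorded in the text) that $A$ is $(\mathcal{I},\mathcal{J},\sigma)$-almost regular with respect to $T$ if and only if $\mathscr{A}$ is $(\mathcal{I},\mathcal{J})$-regular with respect to $T$. So the real work is to translate conditions \ref{item:U1}--\ref{item:U5} for $\mathscr{A}$ into the three conditions \ref{item:K1}--\ref{item:K3}, exploiting heavily the special block-Cesàro structure of the matrices $F^{\sigma,\nu}$: each row $F^{\sigma,\nu}_n$ is the average of $n+1$ coordinate-selection operators, $F^{\sigma,\nu}_n\bm{x}=\frac{1}{n+1}\sum_{i=0}^n x_{\sigma(\nu+i)}$, so if we write $A^\nu:=F^{\sigma,\nu}A$ then $A^\nu_{n,k}=\frac{1}{n+1}\sum_{i=0}^n A_{\sigma(\nu+i),k}$.

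First I would dispose of the boundedness conditions. Condition \ref{item:U1} for $\mathscr{A}$ says $\sup_n\|A^\nu_{n,\omega}\|<\infty$ for each $\nu$; since $A^\nu_{n,\omega}$ is an average of rows of $A$, the triangle inequality gives $\|A^\nu_{n,\omega}\|\le\frac{1}{n+1}\sum_{i=0}^n\|A_{\sigma(\nu+i),\omega}\|$, which is bounded as soon as $\sup_r\|A_{r,\omega}\|<\infty$, i.e. \ref{item:K1} (after passing to the coordinatewise description $\|A_{r,\omega}\|\asymp\sum_k\sum_{i,j}|a_{r,k}(i,j)|$ via the remark in the text that in finite dimensions the group norm is comparable to the sum of absolute values of entries). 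Conversely, if \ref{item:U1} holds for all $\nu$ — in particular for $\nu$ ranging so that $\sigma(\nu)$ hits every value, which it need not, so instead one uses $n=0$: $\|A^\nu_{0,\omega}\|=\|A_{\sigma(\nu),\omega}\|$, and since $\sigma$ is an injection without periodic points its range is infinite; a short additional argument (or appeal to condition \ref{item:U2} with a suitable $J$) upgrades this to $\sup_r\|A_{r,\omega}\|<\infty$. I also need to check that \ref{item:U2} for $\mathscr{A}$ — $\sup_\nu\sup_{n\in J}\|A^\nu_{n,\omega}\|<\infty$ for some $J\in\mathcal{J}^\star$ — is automatic once \ref{item:K1} holds, again because each $A^\nu_{n,\omega}$ is dominated by $\sup_r\|A_{r,\omega}\|$ uniformly in $n$ and $\nu$; so in the almost-regular setting \ref{item:U2} adds nothing beyond \ref{item:K1}. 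Condition \ref{item:U3}, $c^b(X,\mathcal{I})\subseteq\mathrm{dom}(A^\nu)$ for all $\nu$, reduces to $c^b(X,\mathcal{I})\subseteq\mathrm{dom}(A)$, which in finite dimensions is implied by \ref{item:K1} (bounded entries summable over $k$ with $X=\mathbf{R}^d$), so it is likewise subsumed.

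Next I would handle the "limit" conditions. Condition \ref{item:U4}, $\mathcal{J}\text{-}\lim_n\sum_k A^\nu_{n,k}=T$ uniformly on $\nu$, unwinds to $\mathcal{J}\text{-}\lim_n F^{\sigma,\nu}_n(\sum_k A_{\cdot,k})=T$ uniformly on $\nu$; writing $s_r:=\sum_k A_{r,k}$ (a bounded-operator-valued sequence, coordinatewise $\sum_k a_{r,k}(i,j)$), this is exactly $\mathcal{J}\text{-}\lim_n\frac{1}{n+1}\sum_{i=0}^n s_{\sigma(\nu+i)}=T$ uniformly on $\nu$, i.e. $(\mathcal{J},\sigma)\text{-}\lim_r s_r=T$, which coordinatewise is precisely \ref{item:K2}. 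Similarly, condition \ref{item:U5}, $\mathcal{J}\text{-}\lim A^\nu\bm{x}=0$ uniformly on $\nu$ for all $\bm{x}\in c_{00}^b(X,\mathcal{I})$: it suffices (by linearity and the finite-dimensionality, reducing a general $\bm{x}\in c_{00}^b$ supported on $E\in\mathcal{I}$ to finite linear combinations of standard basis vectors placed at coordinates in $E$) to test on $\bm{x}=e_j\mathbf{1}_E$ for $E\in\mathcal{I}$ and $1\le j\le d$; plugging in gives $\mathcal{J}\text{-}\lim_n\frac{1}{n+1}\sum_{i=0}^n\big(\sum_{k\in E}A_{\sigma(\nu+i),k}e_j\big)=0$ uniformly on $\nu$, i.e. $(\mathcal{J},\sigma)\text{-}\lim_r\sum_{k\in E}A_{r,k}e_j=0$, which in coordinates is \ref{item:K3}. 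Here one must be slightly careful that uniform-on-$\nu$ $\mathcal{J}$-convergence of the Cesàro-along-$\sigma$ averages is literally the definition of $(\mathcal{J},\sigma)$-convergence (Definition \ref{defi:ac}), so the equivalence is definitional once the algebra is matched up.

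The main obstacle I anticipate is the boundedness bookkeeping in the $\Rightarrow$ direction — recovering the single condition \ref{item:K1} $\sup_r\|A_{r,\omega}\|<\infty$ from the family-indexed conditions \ref{item:U1}--\ref{item:U2}, since $\sigma$ need not be surjective so one cannot simply read off every row $A_r$ from some $A^\nu_{0,\omega}$. The fix is that $\sigma$ has infinite range and, more usefully, that for each $r$ there are infinitely many $(n,\nu)$ with $\sigma(\nu+n)=r$ contributing weight $\frac{1}{n+1}$ to $A^\nu_{n,\omega}$ — but to extract a clean bound one instead argues: if $\sup_r\|A_{r,\omega}\|=\infty$, pick $r_m$ with $\|A_{r_m,\omega}\|\to\infty$; choosing $\nu_m$ with $\sigma(\nu_m)=r_m$ (possible whenever $r_m\in\mathrm{ran}\,\sigma$; and one can arrange to pick $r_m$ in the range since the range is infinite and cofinite-intersection arguments via $J\in\mathcal{J}^\star$ do the rest) and $n=0$ contradicts \ref{item:U1} for that $\nu_m$. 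Packaging this cleanly, and making sure the quantifier "for all $A\in\mathscr{A}$" in \ref{item:U1} is being used correctly (it is: $\mathscr{A}$ is indexed by all of $\omega$), is the only genuinely fiddly point; everything else is a direct substitution of the definition of $F^{\sigma,\nu}$.
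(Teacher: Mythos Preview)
Your overall strategy---reduce to the $(\mathcal{I},\mathcal{J})$-regularity of $\mathscr{A}=\{F^{\sigma,\nu}A:\nu\in\omega\}$ and read off the conditions---is the paper's route, and your treatment of the \textsc{If} part and of the correspondences \ref{item:K2}$\leftrightarrow$\ref{item:U4}, \ref{item:K3}$\leftrightarrow$\ref{item:U5} is essentially right (the paper streamlines by invoking the finite-dimensional Theorem~\ref{cor:maincharacterizationIJregular} rather than Theorem~\ref{thm:uniformtoeplitz}, so that the reduction of \ref{item:U5} to the coordinate test on $e_j\mathbf{1}_E$ is already packaged via the extreme-point argument there; your one-line ``by linearity and finite-dimensionality'' glosses over that step).

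There is, however, a genuine gap in your \textsc{Only If} argument for \ref{item:K1}. You try to extract $\sup_r\|A_{r,\omega}\|<\infty$ from \ref{item:U1}--\ref{item:U2} via $A^\nu_{0,\omega}=A_{\sigma(\nu),\omega}$ and ``cofinite-intersection arguments,'' but the family $\mathscr{A}$ carries \emph{no} information about rows $A_r$ with $r\notin\mathrm{ran}(\sigma)$, and $\sigma$ need not be surjective. Concretely: take $d=m=1$, $\sigma(n)=2n+1$, $T=0$, and $a_{r,0}=r$ for $r$ even with $a_{r,k}=0$ otherwise. Then every $A^\nu$ is the zero matrix, so $\mathscr{A}$ satisfies \ref{item:U1}--\ref{item:U5} trivially, yet \ref{item:K1} fails. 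The paper avoids this by not going through $\mathscr{A}$ for this step: almost regularity requires $A\bm{x}\in V_{\mathcal{J},\sigma}(Y)\subseteq\ell_\infty(Y)$ for every $\bm{x}\in c^b(X,\mathcal{I})\supseteq c(X)$, so in particular $A\in(c(X),\ell_\infty(Y))$, and the characterization of that matrix class (\cite[Corollary~3.11]{Leo22}) yields \ref{item:K1} directly. Your proposed workaround cannot substitute for this, because the boundedness of $A\bm{x}$ at indices outside $\mathrm{ran}(\sigma)$ is precisely the content that the family $\mathscr{A}$ discards.
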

Rather special instances of Theorem \ref{thm:kingmultidimensional} already appeared in the literature. For example, we deduce Schaefer's characterization  \cite[Theorem 2]{MR306763} in the case $d=m=1$ and $\mathcal{I}=\mathcal{J}=\mathrm{Fin}$. If, in addition, $\sigma=\sigma_0$ we obtain King's characterization of almost regular matrices \cite[Theorem 3.2]{MR201872}.

\subsection{Uniform superior limits and core inclusions} 
In this last section, we specialize to the real case (hence $X=Y=\mathbf{R}$).  Given a bounded real sequence $\bm{x}=(x_n: n \in \omega) \in \ell_\infty$ and an ideal $\mathcal{I}$ on $\omega$, we say that $\eta \in \mathbf{R}$ is a $\mathcal{I}$\emph{-cluster point of} $\bm{x}$ if 
$$
\forall \varepsilon>0, \quad 
\{n \in \omega: |x_n-\eta|\le \varepsilon\}\notin \mathcal{I}. 
$$
The set of $\mathcal{I}$-cluster points of $\bm{x}$ is denoted by $\Gamma_{\bm{x}}(\mathcal{I})$. It is known that, for each $\bm{x} \in \ell_\infty$, the set $\Gamma_{\bm{x}}(\mathcal{I})$ is nonempty and compact; see e.g. \cite[Lemma 3.1]{MR3920799}. Hence, we can define 
$$
\mathcal{I}\text{-}\liminf \bm{x}:=\min \Gamma_{\bm{x}}(\mathcal{I})
\quad \text{ and }\quad 
\mathcal{I}\text{-}\limsup \bm{x}:=\max \Gamma_{\bm{x}}(\mathcal{I})
$$
for each $\bm{x} \in \ell_\infty$. As one may expect, we have that the above values coincide if and only if $\bm{x}$ is $\mathcal{I}$-convergent, see e.g. \cite[Corollary 3.4]{MR3920799}. 

At this point, let us recall that an infinite real matrix $A=(a_{n,k}: n,k \in \omega)$ maps bounded sequences into bounded sequences (i.e., $A \in (\ell_\infty, \ell_\infty)$ if and only if $\|A\|:=\sup_n \sum_k |a_{n,k}|<\infty$, see e.g. \cite[Theorem 2.3.5]{MR1817226}. The following result, in the same spirit as Theorem \ref{thm:characterizationuniformconvergence}, provides a characterization of uniform superior limits.  
\begin{prop}\label{prop:uniflimsup}
Let $\mathscr{A}$ be a nonempty family of real matrices $A=(A_{n,k}: n,k \in \omega)$ such that $\sup\{\|A\|: A \in \mathscr{A}\}<\infty$. Let also $\mathcal{I}$ be an ideal on $\omega$. Then
\begin{equation}\label{eq:claimuniflimsup}
\forall \bm{x} \in \ell_\infty, \quad 
\mathcal{I}\text{-}\limsup_{n\to \infty}\,  \sup_{A \in \mathscr{A}} A_n\bm{x}
=\sup_{B \in \mathscr{B}_{\mathscr{A}}}\, \mathcal{I}\text{-}\limsup_{n\to \infty} B_n\bm{x}.
\end{equation}
\end{prop}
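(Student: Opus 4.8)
The plan is to prove the two inequalities separately. First I would establish the easy direction "$\ge$": for every $B \in \mathscr{B}_{\mathscr{A}}$ and every $n$, since the $n$th row of $B$ coincides with the $n$th row of some $A \in \mathscr{A}$, we have $B_n\bm{x} \le \sup_{A \in \mathscr{A}} A_n\bm{x}$; taking $\mathcal{I}\text{-}\limsup$ and using its monotonicity (from the monotonicity of $\Gamma_{\bm{x}}(\mathcal I)$-based $\max$) gives $\mathcal{I}\text{-}\limsup_n B_n\bm{x} \le \mathcal{I}\text{-}\limsup_n \sup_{A \in \mathscr{A}} A_n\bm{x}$, and then one takes the supremum over $B$. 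Note the uniform bound $\sup\{\|A\|: A \in \mathscr{A}\}<\infty$ guarantees that $\sup_{A \in \mathscr{A}} A_n\bm{x}$ is a finite real number for each fixed $n$ (it is bounded by $\|\bm{x}\|_\infty \sup_A \|A\|$), and that the sequence $(\sup_{A \in \mathscr{A}} A_n\bm{x})_n$ lies in $\ell_\infty$, so that all the $\mathcal{I}$-superior limits appearing are well-defined finite quantities.

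For the reverse inequality "$\le$", write $L:=\mathcal{I}\text{-}\limsup_n \sup_{A \in \mathscr{A}} A_n\bm{x}=\max \Gamma_{\bm{y}}(\mathcal I)$ where $\bm y := (\sup_{A \in \mathscr{A}} A_n\bm{x})_n$. The idea is to construct a single $B \in \mathscr{B}_{\mathscr{A}}$ which "witnesses" a value arbitrarily close to $L$. Fix $\varepsilon>0$. Since $L \in \Gamma_{\bm y}(\mathcal I)$, the set $E_\varepsilon := \{n : \sup_{A \in \mathscr{A}} A_n\bm{x} > L-\varepsilon\}$ is not in $\mathcal I$. For each $n \in E_\varepsilon$, by definition of the supremum, pick $\nu(n)<\kappa$ with $A^{\nu(n)}_n\bm{x} > L-\varepsilon$; for $n \notin E_\varepsilon$, pick $\nu(n)$ arbitrarily. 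Define $B \in \mathscr{B}_{\mathscr{A}}$ by letting its $n$th row be the $n$th row of $A^{\nu(n)}$. Then $B_n\bm{x} > L-\varepsilon$ for all $n \in E_\varepsilon$, and since $E_\varepsilon \notin \mathcal I$ while $(B_n\bm x)_n$ is bounded, a standard argument (the set where a bounded sequence exceeds $L-\varepsilon$ being $\mathcal I$-positive forces an $\mathcal{I}$-cluster point $\ge L-\varepsilon$, hence $\mathcal{I}\text{-}\limsup_n B_n\bm{x} \ge L-\varepsilon$) gives $\sup_{B' \in \mathscr{B}_{\mathscr{A}}} \mathcal{I}\text{-}\limsup_n B'_n\bm{x} \ge L-\varepsilon$. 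Letting $\varepsilon \downarrow 0$ finishes this direction.

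The main obstacle I anticipate is the small lemma used twice above, namely: if $\bm z \in \ell_\infty$ and $\{n : z_n > c\} \notin \mathcal I$, then $\mathcal{I}\text{-}\limsup \bm z \ge c$, equivalently $\max \Gamma_{\bm z}(\mathcal I) \ge c$. This follows because the restriction of $\bm z$ to the $\mathcal I$-positive set $\{n: z_n>c\}$ (together with compactness of $\Gamma$ and the fact, from the cited \cite[Lemma 3.1, Corollary 3.4]{MR3920799}, that $\Gamma_{\bm z}(\mathcal I)$ is a nonempty compact set accumulated by the values of $\bm z$ along every $\mathcal I$-positive index set) must contain a cluster point in $[c,\infty)$; I would either cite this directly or give the one-line Bolzano--Weierstrass-style argument: by $\mathcal I$-positivity, for each $m$ the set $\{n : z_n > c\}$ cannot be covered by finitely many $\mathcal I$-small sets, so a subsequence argument along a countably generated refinement (or a direct compactness argument on $[c,\|\bm z\|_\infty]$) yields the desired $\mathcal I$-cluster point. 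Since $\mathcal I$ here is an arbitrary ideal (no countable generation is assumed in this proposition), I would favor the compactness formulation: cover $[-\|\bm z\|_\infty, c)$ by finitely many open intervals of length $<\varepsilon$; if no point of $[c,\|\bm z\|_\infty]$ were an $\mathcal I$-cluster point, then $\{n : z_n > c\}$ would be a finite union of $\mathcal I$-sets, a contradiction. The rest is bookkeeping with suprema and monotonicity of $\mathcal{I}\text{-}\limsup$, which is routine.
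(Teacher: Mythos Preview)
Your proposal is correct and follows essentially the same two-inequality strategy as the paper: monotonicity of $\mathcal{I}\text{-}\limsup$ for ``$\ge$'', and construction of a witnessing $B\in\mathscr{B}_{\mathscr{A}}$ for ``$\le$''. The only noteworthy difference is that the paper chooses a \emph{single} function $\alpha$ (e.g.\ with $A^{\alpha(n)}_n\bm{x}$ within $1/(n{+}1)$ of $\sup_A A_n\bm{x}$) so that one matrix $B^\star$ satisfies $\{n:B^\star_n\bm{x}\ge L-\varepsilon\}\notin\mathcal{I}$ for \emph{every} $\varepsilon>0$, hence the supremum on the right of \eqref{eq:claimuniflimsup} is actually attained; your $\varepsilon$-dependent $B$ only shows the supremum is approximated, which is of course enough for the stated equality. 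One small slip: in your compactness lemma you should cover $[c,\|\bm z\|_\infty]$ (not $[-\|\bm z\|_\infty,c)$) by finitely many short intervals; then the absence of an $\mathcal{I}$-cluster point in $[c,\|\bm z\|_\infty]$ forces $\{n:z_n>c\}$ to lie in a finite union of $\mathcal{I}$-sets, giving the contradiction.
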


Lastly, we are going to show that Proposition \ref{prop:uniflimsup} is useful to recover \textquotedblleft uniform\textquotedblright\,results from ordinary ones (in particular, without repeating the techniques of the latter proofs). For instance, recall that $\mathcal{I}$\emph{-core} of a bounded real sequence $\bm{x}\in \ell_\infty$ is the interval
$$
\mathrm{core}_{\bm{x}}(\mathcal{I}):=[\mathcal{I}\text{-}\liminf \bm{x}, \mathcal{I}\text{-}\limsup \bm{x}].
$$
In the case where $\mathcal{I}=\mathrm{Fin}$, $\mathrm{core}_{\bm{x}}(\mathcal{I})$ is usually called \emph{Knopp core}. 
We refer the reader to \cite{MR4126774, MR3955010} for several properties and characterizations of $\mathcal{I}$-cores for sequences taking values in topological vector spaces. 

At this point, let us recall the following characterization about inclusions of ideal cores, cf. also \cite{MR2241135, MR1416085} and references therein for related results. 
\begin{thm}\label{thm:connor}
Let $\mathcal{I}$ be an ideal on $\omega$ and pick a matrix $A \in (\ell_\infty, \ell_\infty)$. Then 
\begin{equation}\label{eq:inclusioncores}
\forall \bm{x} \in \ell_\infty, \quad 
\mathrm{core}_{A\bm{x}}(\mathrm{Fin})\subseteq \mathrm{core}_{\bm{x}}(\mathcal{I})
\end{equation}
if and only if the following conditions hold\textup{:}
\begin{enumerate}[label={\rm (\textsc{C}\arabic{*})}]
\item \label{item:C1} $\lim_n \sum_{k \in E}|a_{n,k}|=0$ for all $E \in \mathcal{I}$\textup{;}
\item  \label{item:C2} $\lim_n \sum_k a_{n,k}=1$\textup{;}
\item \label{item:C3} $\lim_n \sum_k |a_{n,k}|=1$\textup{.}
\end{enumerate}
\end{thm}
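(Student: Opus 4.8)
The plan is to prove Theorem \ref{thm:connor} by combining Proposition \ref{prop:uniflimsup} with a known, non-uniform characterization of Knopp-core inclusions, thereby illustrating the ``translation'' philosophy advertised in the introduction. Concretely, the classical Knopp core theorem states that for $A \in (\ell_\infty,\ell_\infty)$ one has $\mathrm{core}_{A\bm{x}}(\mathrm{Fin}) \subseteq \mathrm{core}_{\bm{x}}(\mathrm{Fin})$ for all $\bm{x} \in \ell_\infty$ if and only if $A$ is regular (i.e.\ satisfies the Silverman--Toeplitz conditions \ref{item:C2}, \ref{item:C3}, plus $\lim_n a_{n,k}=0$ for each $k$) --- this is the Knopp--Agnew result, available e.g.\ from \cite[Chapter 7]{MR0390692} together with \cite{MR1817226}. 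So the strategy is: first reduce the statement ``$\mathrm{core}_{A\bm{x}}(\mathrm{Fin}) \subseteq \mathrm{core}_{\bm{x}}(\mathcal{I})$ for all $\bm{x}$'' to a statement about ordinary limsups via the identity $\mathrm{core}_{\bm{x}}(\mathcal{J}) \subseteq \mathrm{core}_{\bm{y}}(\mathcal{J}')$ $\iff$ $\mathcal{J}'\text{-}\limsup \bm{y} \le \mathcal{J}\text{-}\limsup \bm{x}$ for all relevant pairs (applied both to $\bm{x}$ and to $-\bm{x}$, since cores are symmetric), and then identify when $\mathrm{Fin}\text{-}\limsup A\bm{x} \le \mathcal{I}\text{-}\limsup \bm{x}$ holds for every bounded $\bm{x}$.

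The key step is to rewrite $\mathcal{I}\text{-}\limsup \bm{x}$ as a uniform superior limit of an auxiliary averaging-type family, so that Proposition \ref{prop:uniflimsup} applies. The natural choice is to take $\mathscr{A}$ to be a suitable family encoding ``averaging over large sets modulo $\mathcal{I}$'': for each $E \in \mathcal{I}$, let $B^E$ be the matrix that replaces the columns indexed by $E$ by zero and suitably renormalizes, or more directly work with the family $\{A\} \cup \{A^E : E \in \mathcal{I}\}$ where $A^E_{n,k} := a_{n,k}$ for $k \notin E$ and $0$ for $k \in E$; here each $\|A^E\| \le \|A\|$, so the uniform boundedness hypothesis of Proposition \ref{prop:uniflimsup} is met. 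The inclusion \eqref{eq:inclusioncores} is then equivalent to $\mathrm{Fin}\text{-}\limsup_n \sup_{B \in \mathscr{B}_{\mathscr{A}}} B_n \bm{x} \le \mathcal{I}\text{-}\limsup \bm{x}$ for all $\bm{x} \in \ell_\infty$, and by Proposition \ref{prop:uniflimsup} the left side equals $\sup_{B \in \mathscr{B}_{\mathscr{A}}} \mathrm{Fin}\text{-}\limsup_n B_n\bm{x}$, which reduces everything to the ordinary Knopp core characterization applied row-by-row. Tracking the conditions on the rows then produces exactly \ref{item:C1} (the columns in $E \in \mathcal{I}$ must be negligible, forcing $\lim_n \sum_{k\in E}|a_{n,k}| = 0$), \ref{item:C2}, and \ref{item:C3}.

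For the converse direction, assuming \ref{item:C1}--\ref{item:C3}, I would argue directly: fix $\bm{x} \in \ell_\infty$ and let $\beta := \mathcal{I}\text{-}\limsup \bm{x}$. Then the set $E_\varepsilon := \{k : x_k > \beta + \varepsilon\}$ lies in $\mathcal{I}$ for every $\varepsilon > 0$ (this is immediate from the definition of $\mathcal{I}$-cluster points and compactness of $\Gamma_{\bm{x}}(\mathcal{I})$). Splitting $\sum_k a_{n,k} x_k = \sum_{k \in E_\varepsilon} a_{n,k} x_k + \sum_{k \notin E_\varepsilon} a_{n,k} x_k$ and estimating the first sum by $\|\bm{x}\|_\infty \sum_{k \in E_\varepsilon}|a_{n,k}| \to 0$ (by \ref{item:C1}) and the second by $(\beta+\varepsilon)\sum_{k \notin E_\varepsilon}a_{n,k}^+ + \text{(small by \ref{item:C1})}$, together with \ref{item:C2}, \ref{item:C3} forcing $\sum_k a_{n,k}^- \to 0$, yields $\limsup_n A_n\bm{x} \le \beta + O(\varepsilon)$; letting $\varepsilon \downarrow 0$ gives $\mathrm{Fin}\text{-}\limsup A\bm{x} \le \mathcal{I}\text{-}\limsup \bm{x}$, and symmetrically for the liminf, hence \eqref{eq:inclusioncores}.

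The main obstacle I anticipate is the bookkeeping in the forward direction: extracting \ref{item:C1}--\ref{item:C3} from the core inclusion requires choosing the right test sequences $\bm{x}$ (characteristic-type sequences supported off a given $E \in \mathcal{I}$, or oscillating $\pm 1$ sequences tailored to the columns) and making sure the reduction via Proposition \ref{prop:uniflimsup} is deployed with a family $\mathscr{A}$ whose block structure $\mathscr{B}_{\mathscr{A}}$ genuinely captures ``limsup modulo $\mathcal{I}$'' rather than something weaker; one must check that the negligibility condition \ref{item:C1} is not merely sufficient but forced. I expect the standard Knopp-core test-sequence arguments (as in \cite{MR1416085, MR2241135}) to carry over with the ideal $\mathcal{I}$ inserted in the right place, so the real content is verifying that Proposition \ref{prop:uniflimsup} legitimately interchanges the uniform supremum with the limsup in this setting --- which it does, precisely because $\sup\{\|A^E\| : E \in \mathcal{I}\} \le \|A\| < \infty$.
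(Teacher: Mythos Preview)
The paper does not prove Theorem~\ref{thm:connor} via its own machinery: the proof is a two-line citation to the classical Silverman--Toeplitz theorem and Connor's result \cite[Theorem~22.7]{MR1734462}. In the paper's architecture, Theorem~\ref{thm:connor} is an \emph{input}: it is combined with Proposition~\ref{prop:uniflimsup} to obtain the uniform Corollary~\ref{cor:uniforminclusioncores}. Your plan inverts this flow---you try to use Proposition~\ref{prop:uniflimsup} to prove Theorem~\ref{thm:connor} itself---which is the wrong direction of the ``translation'' philosophy.

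More concretely, your forward-direction reduction has a genuine gap. You assert that the single-matrix inclusion \eqref{eq:inclusioncores} is equivalent to the uniform inequality $\limsup_n \sup_{B\in\mathscr{B}_{\mathscr{A}}} B_n\bm{x}\le \mathcal{I}\text{-}\limsup \bm{x}$ for the auxiliary family $\mathscr{A}=\{A\}\cup\{A^E:E\in\mathcal{I}\}$, but this is never justified. Even granting it, the next step fails: after applying Proposition~\ref{prop:uniflimsup} you would need, for each $B\in\mathscr{B}_{\mathscr{A}}$, the inequality $\limsup_n B_n\bm{x}\le \mathcal{I}\text{-}\limsup\bm{x}$, whereas the classical Knopp--Agnew theorem only delivers $\limsup_n B_n\bm{x}\le \mathrm{Fin}\text{-}\limsup\bm{x}$, which is a weaker bound when $\mathcal{I}\supsetneq\mathrm{Fin}$. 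Proposition~\ref{prop:uniflimsup} shuffles a supremum over matrices on the left-hand side; it cannot change the ideal appearing on the right. (Incidentally, \ref{item:C3} is the extra Knopp--Agnew condition, not one of the Silverman--Toeplitz regularity axioms.)

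Your converse direction---the direct splitting argument assuming \ref{item:C1}--\ref{item:C3}---is correct and standard. For the forward direction one also argues directly, without Proposition~\ref{prop:uniflimsup}: since $\mathrm{core}_{\bm{x}}(\mathcal{I})\subseteq\mathrm{core}_{\bm{x}}(\mathrm{Fin})$, the hypothesis already yields the classical Knopp core inclusion, hence \ref{item:C2} and \ref{item:C3} by the $\mathrm{Fin}$ case; then for \ref{item:C1}, any $\bm{x}$ supported on a fixed $E\in\mathcal{I}$ with values in $\{-1,+1\}$ has $\mathcal{I}\text{-}\lim\bm{x}=0$, so $\lim_n A_n\bm{x}=0$, and a standard sign-selection (gliding-hump) argument upgrades this to $\lim_n\sum_{k\in E}|a_{n,k}|=0$.
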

\begin{proof}
It follows by the classical Silverman--Toeplitz characterization of regular matrices \cite[Chapter 7, Theorem 3]{MR0390692} and \cite[Theorem 22.7]{MR1734462}. (The case where $\mathcal{I}=\mathrm{Fin}$ has been proved by Maddox in \cite[Theorem 1]{MR549529}, 
cf. also \cite[Proposition 4.3]{MR4600193}, 
while the case where $\mathcal{I}$ is the family of asysmptotic density zero sets appears in \cite[Theorem 1]{MR1441452}.)
\end{proof}

Notice that \eqref{eq:inclusioncores} can be rewritten equivalently as $\limsup_n A_n\bm{x} \le \mathcal{I}\text{-}\limsup \bm{x}$ for all $\bm{x} \in \ell_\infty$. As a consequence of Proposition \ref{prop:uniflimsup}, we obtain the \textquotedblleft uniform\textquotedblright\, version of Theorem \ref{thm:connor}. 
\begin{cor}\label{cor:uniforminclusioncores}
Let $\mathcal{I}$ be an ideal on $\omega$ and pick a nonempty family of matrices $\mathscr{A}$ such that $\sup\{\|A\|: A \in \mathscr{A}\}<\infty$. Then 
\begin{equation}\label{eq:newinclusionicores}
\forall \bm{x} \in \ell_\infty, \quad 
\limsup_{n\to \infty}\,  \sup_{A \in \mathscr{A}} A_n\bm{x}
\le \mathcal{I}\text{-}\limsup \bm{x}
\end{equation}
if and only if the following conditions hold\textup{:}
\begin{enumerate}[label={\rm (\textsc{L}\arabic{*})}]
\item \label{item:L1} $\lim_n \sum_{k \in E}|a_{n,k}|=0$ uniformly on $A \in \mathscr{A}$, for all $E \in \mathcal{I}$\textup{;}
\item  \label{item:L2} $\lim_n \sum_k a_{n,k}=1$ uniformly on $A \in \mathscr{A}$\textup{;}
\item \label{item:L3} $\lim_n \sum_k |a_{n,k}|=1$ uniformly on $A \in \mathscr{A}$\textup{.}
\end{enumerate}
\end{cor}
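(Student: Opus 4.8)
The plan is to deduce Corollary \ref{cor:uniforminclusioncores} from Theorem \ref{thm:connor} together with Proposition \ref{prop:uniflimsup}, in exactly the spirit announced in the introduction: the ordinary characterization plus the reduction of a uniform superior limit to a supremum over $\mathscr{B}_{\mathscr{A}}$. First I would observe that, since $\sup\{\|A\|: A \in \mathscr{A}\} < \infty$, every $B \in \mathscr{B}_{\mathscr{A}}$ also satisfies $\|B\| \le \sup\{\|A\|: A \in \mathscr{A}\} < \infty$, because each row of $B$ is a row of some $A \in \mathscr{A}$; in particular $B \in (\ell_\infty, \ell_\infty)$, so Theorem \ref{thm:connor} applies to every such $B$ (with $\mathcal{I}$ in place of the ambient ideal, and $\mathrm{Fin}$ as the outer one — note the displayed inequality in \eqref{eq:inclusioncores} is, as remarked just after that theorem, equivalent to $\limsup_n B_n\bm{x} \le \mathcal{I}\text{-}\limsup \bm{x}$ for all $\bm{x} \in \ell_\infty$).

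Next I would rewrite the left-hand side of \eqref{eq:newinclusionicores} using Proposition \ref{prop:uniflimsup} with $\mathcal{I} = \mathrm{Fin}$ (so that $\mathcal{I}\text{-}\limsup$ becomes the ordinary $\limsup$): the assumption $\sup\{\|A\|: A \in \mathscr{A}\} < \infty$ is exactly the hypothesis of that proposition, and it yields
\begin{equation*}
\limsup_{n\to \infty}\,  \sup_{A \in \mathscr{A}} A_n\bm{x}
=\sup_{B \in \mathscr{B}_{\mathscr{A}}}\, \limsup_{n\to \infty} B_n\bm{x}
\end{equation*}
for every $\bm{x} \in \ell_\infty$. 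Therefore \eqref{eq:newinclusionicores} is equivalent to the assertion that $\limsup_n B_n\bm{x} \le \mathcal{I}\text{-}\limsup \bm{x}$ for all $\bm{x} \in \ell_\infty$ and all $B \in \mathscr{B}_{\mathscr{A}}$, i.e., that every $B \in \mathscr{B}_{\mathscr{A}}$ satisfies the core inclusion \eqref{eq:inclusioncores}. By Theorem \ref{thm:connor}, this holds if and only if each $B \in \mathscr{B}_{\mathscr{A}}$ satisfies \ref{item:C1}, \ref{item:C2}, and \ref{item:C3}.

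The remaining — and only — step is the combinatorial translation: the conjunction of \ref{item:C1}--\ref{item:C3} over all $B \in \mathscr{B}_{\mathscr{A}}$ is equivalent to \ref{item:L1}--\ref{item:L3}. For the direction $(\Leftarrow)$, if \ref{item:L1}--\ref{item:L3} hold and $B \in \mathscr{B}_{\mathscr{A}}$, then for each $n$ the $n$th row of $B$ is the $n$th row of some $A \in \mathscr{A}$, so the three quantities $\sum_{k \in E}|b_{n,k}|$, $\sum_k b_{n,k}$, $\sum_k |b_{n,k}|$ are, entrywise in $n$, among the corresponding quantities of the matrices in $\mathscr{A}$; the uniform (in $A$) convergence statements in \ref{item:L1}--\ref{item:L3} then force the corresponding limits for $B$, giving \ref{item:C1}--\ref{item:C3}. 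For the direction $(\Rightarrow)$, I would argue by contraposition using the standard "diagonal" row-selection trick: if, say, \ref{item:L2} fails, there is $\varepsilon>0$ and, for infinitely many $n$, a matrix $A^{(n)} \in \mathscr{A}$ with $|\sum_k a^{(n)}_{n,k} - 1| \ge \varepsilon$; assembling the matrix $B$ whose $n$th row is the $n$th row of $A^{(n)}$ (and filling the remaining rows arbitrarily from $\mathscr{A}$) produces a $B \in \mathscr{B}_{\mathscr{A}}$ for which \ref{item:C2} fails, and similarly for \ref{item:L1} (here one also uses that $E \in \mathcal{I}$ is fixed, so the row-selection can be done $E$ by $E$) and \ref{item:L3}. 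The main obstacle, such as it is, lies in this last equivalence — specifically in checking that the negation of the \emph{uniform} statements \ref{item:L1}--\ref{item:L3} can be witnessed by a \emph{single} matrix $B \in \mathscr{B}_{\mathscr{A}}$ rather than merely by a sequence of matrices in $\mathscr{A}$; but this is precisely what the definition of $\mathscr{B}_{\mathscr{A}}$ (free choice of row per index) makes routine, and the same device was already used in the proof of Proposition \ref{prop:uniflimsup}, so I would simply invoke it. One should also note \ref{item:C3} (and hence \ref{item:L3}) subsumes the boundedness $\|B\| = \sup_n \sum_k |b_{n,k}| < \infty$ needed for Theorem \ref{thm:connor} to apply to $B$ only in the limit, but the uniform bound on $\mathscr{A}$ already guarantees $B \in (\ell_\infty,\ell_\infty)$ unconditionally, so no circularity arises.
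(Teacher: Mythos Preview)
Your proof is correct and follows the same route as the paper: reduce via Proposition \ref{prop:uniflimsup} to the per-$B$ core inclusion, invoke Theorem \ref{thm:connor}, and then translate \ref{item:C1}--\ref{item:C3} for all $B \in \mathscr{B}_{\mathscr{A}}$ into the uniform statements \ref{item:L1}--\ref{item:L3}. The only cosmetic difference is that the paper dispatches this last translation by citing Theorem \ref{thm:characterizationuniformconvergence}, whereas you carry out the (equivalent) diagonal row-selection argument by hand.
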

Special cases of Corollary \ref{cor:uniforminclusioncores} can be found in the literature: for instance, if $\mathscr{A}$ is countably infinite and $\mathcal{I}=\mathrm{Fin}$, a direct proof can be found in \cite[Theorem 1]{MR1068009}. 
Choosing $\mathscr{A}=\{F^{\sigma,\nu}A: \nu\in \omega\}$ for a given matrix $A \in (\ell_\infty, \ell_\infty)$, we obtain a more explicit characterization of \cite[Theorem 2]{MR2033032} (for the entries of $F^{\sigma,\nu}A$, see the proof of Theorem \ref{thm:kingmultidimensional}). 

We leave as open question for the interested reader to check whether our main results can be adapted for uniform strong summability, meaning that $\lim_n|A_nx-\eta|=0$ uniformly on $A \in \mathscr{A}$. This would provide a method to obtained streamlined proofs of the results in \cite{Madd1, Madd2}.

\section{Proofs}\label{sec:proofs}
\begin{proof}
[Proof of Theorem \ref{thm:characterizationuniformconvergence}]
As in Section \ref{sec:intro}, we write $\mathscr{A}=\{A^\nu: \nu<\kappa\}$, where each $A^\nu=(A^\nu_{n,k}: n,k \in \omega)$ is a matrix of linear operators in $\mathcal{L}(X,Y)$. Also, ideals are regarded as subset of the Cantor space $\{0,1\}^\omega$, hence we may speak about their topological complexity. 

\medskip

\ref{item:1uniformcondition} $\implies$ \ref{item:2uniformcondition} 
Fix $B \in \mathscr{B}_{\mathscr{A}}$. Then there exists a sequence $(\nu_n: n \in \omega)$ such that $\nu_n<\kappa$ for all $n$, and $B_{n,k}=A_{n,k}^{\nu_n}$ for all $n,k \in \omega$. By hypothesis, for each neighborhood $U$ of $\eta_1$ there exists $S \in \mathcal{I}^\star$ such that $A^\nu_n\bm{x} \in U$ for all $n \in \omega$ and $\nu<\kappa$. In particular, $\sum_kA^{\nu_n}_{n,k}x_k=\sum_k B_{n,k}x_k \in U$ for all $n\in \omega$, hence $\mathcal{I}\text{-}\lim B\bm{x}=\eta_1$. 

\medskip

\ref{item:2uniformcondition} $\implies$ \ref{item:3uniformcondition}  
Let us suppose that there exists a $B,B^\prime \in \mathscr{B}_{\mathscr{A}}$ such that $\eta_B \neq \eta_{B^\prime}$. Since $Y$ is Hausdorff by \cite[Theorem 1.12]{MR1157815}, there exist neighborhoods $U,U^\prime$ of $\eta_B$ and $\eta_{B^\prime}$, respectively, which are disjoint. 
Let $(\nu_n)$ and $(\nu_n^\prime)$ be the sequence of indices associated with $B$ and $B^\prime$, resp.; note that, 
since $\mathcal{I}$ is, in particular, a $F_\sigma$-ideal (hence, not maximal), there exists a partition $\{I,I^\prime\}$ of $\omega$ such that $I,I^\prime \notin \mathcal{I}$. 
Accordingly, let $C \in \mathscr{B}_{\mathscr{A}}$ be the matrix associated with the sequence $(\lambda_n)$ such that $\lambda_n=\nu_n$ if $n\in I$ and $\lambda_n=\nu^\prime_n$ if $n\in I^\prime$. 

If $\eta_C\notin \{\eta_B,\eta_{B^\prime}\}$, there exists a neighborhood $U_{\star}$ of $\eta_C$ such that $U_\star\cap (U\cup U^\prime)=\emptyset$. It follows by construction that 
\begin{displaymath}
\begin{split}
\{n \in \omega: C_n\bm{x} \in U_\star\}\subseteq \{n \in I: B_n\bm{x} \notin U\}\cup \{n \in I^\prime: B^\prime_n\bm{x} \notin U^\prime\} \in \mathcal{I},
\end{split}
\end{displaymath}
which is a contradiction. Otherwise, if $\eta_C=\eta_B$, set $U_\star=U$ so that $\{n \in \omega: C_n\bm{x} \notin U\}\subseteq \{n \in \omega: B_n\bm{x} \notin U\}\cup I^\prime \notin \mathcal{I}\cup \mathcal{I}^\star$, which is again a contradiction. 
The case $\eta_C=\eta_{B^\prime}$ is analogous. 
This proves that $\eta_B=\eta_{B^\prime}$. 

\medskip

\ref{item:3uniformcondition} $\implies$ \ref{item:1uniformcondition} 
Let us suppose that \ref{item:1uniformcondition} fails, so that 
for all $\eta \in Y$ there exists a neighborhood $U$ of $\eta$ such that, for all $S \in \mathcal{I}^\star$, it holds $A_n^\nu \bm{x} \notin U$ for some $\nu<\kappa$ and some $n \in S$. 

Since $\mathcal{I}$ is countably generated by hypothesis, there exists a strictly decreasing sequence of sets $(S_t: t \in \omega)$ such that $S \in \mathcal{I}^\star$ if and only if $S_t\subseteq S$ for some $t \in \omega$. We may also assume without loss of generality that $\min S_t\ge t$ for all $t \in \omega$. 
In addition, since $Y$ is separable we can fix a countable dense subset $\{\eta_m: m \in \omega\}$. It follows that 
\begin{equation}\label{eq:negationitem1}
\forall m \in \omega, \exists U_m, \forall t \in \omega, 
\exists \nu({m,t})<\kappa, \exists n({m,t}) \in S_t, \quad 
 A_{n({m,t})}^{\nu({m,t})}\bm{x} \notin U_m.
\end{equation}
Since $\min S_t\ge t$, it follows that $\lim_t n({m,t})=\infty$ for $m \in \omega$. 
Hence, for each $m$, we can pick a subsequence $(n({m,t_s^{(m)}}): s \in \omega)$ which is strictly increasing. 
Let $f: \omega \to \omega$ be an arbitrary function such that, for each $m \in \omega$, there exist infinitely many $r \in \omega$ for which $f(r)=m$. 
Lastly, choose a function $g: \omega\to \omega$ with the property that $(n(h(r)): r \in \omega)$ is a strictly increasing sequence, where 
$$
\textstyle 
\forall r \in \omega, \quad 
h(r):=\left(f(r), t^{(f(r))}_{g(r)}\right).
$$

Now, suppose for the sake of contradiction that \ref{item:3uniformcondition} holds. 
Let $B$ be the matrix in $\mathscr{B}_{\mathscr{A}}$ associated with the sequence $(\nu_n)$ such that $\nu_n:=\nu(h(r))$ if $n=n(h(r))$ for some $r \in \omega$ and $\nu_n:=0$ otherwise. In particular, 
$$
\forall r\in\omega, \quad 
B_{n(h(r))}=A_{n(h(r))}^{\nu(h(r))}.
$$
%
Since $\{\eta_m: m \in \omega\}$ is dense, there exist $m^\star \in \omega$ and a neighborhood $U$ of $\eta_2$ such that $U\subseteq U_{m^\star}$, where $U_{m^\star}$ is a neighborhood associated to $\eta^\star$ as in \eqref{eq:negationitem1}. It follows 
by $\mathcal{I}\text{-}\lim B\bm{x}=\eta_2$ 
that $\{n \in \omega: B_n\bm{x} \in U\}\in \mathcal{I}^\star$, which implies that 
$$
\exists t_0 \in\omega, \forall n \in S_{t_0}, \quad A^{\nu_n}_n \bm{x} \in U_{m^\star}.
$$
In particular, since $n(m^\star, t) \in S_t$ for all $t \in \omega$ and $(S_t)$ is a decreasing sequence in $\mathcal{I}^\star$, it follows that 
$A^{\nu_{n(m^\star,t)}}_{n(m^\star,t)} \bm{x} \in U_{m^\star}$ for all $t\ge t_0$. 
We obtain by construction that there exists $r^\star \in \omega$ such that $h(r^\star)=(m^\star, t^\star)$, for some $t^\star \ge t_0$. 
We conclude that
$$
A^{\nu_{n(h(r^\star))}}_{n(h(r^\star))}\bm{x} =
A^{\nu(h(r^\star))}_{n(h(r^\star))}\bm{x} =
A^{\nu(m^\star, t^\star)}_{n(m^\star, t^\star)}\bm{x} \in U_{m^\star},
$$
which contradicts \eqref{eq:negationitem1} and concludes the proof. 
\end{proof}


\medskip

\begin{proof}
[Proof of Theorem \ref{thm:uniformtoeplitz}]
\textsc{If part.} Suppose that all items \ref{item:U1}--\ref{item:U5} hold. In particular, items  \ref{item:T1} and \ref{item:T3}--\ref{item:T5} hold for each $A \in \mathscr{A}$. It follows by Theorem \ref{thm:IJREGULARold} that $\{A\}$ is $(\mathcal{I}, \mathcal{J})$-regular with respect to $T$ for each $A \in \mathscr{A}$. In particular, $A \in (c^b(X,\mathcal{I}), c^b(Y, \mathcal{J}))$ for each $A \in \mathscr{A}$. 
The proof that $\mathscr{A}$ satisfies \eqref{eq:uniformAIJregular} for all $\bm{x} \in c^b(X,\mathcal{I})$  along the same lines as the proof of the \textsc{If part} in \cite[Theorem 2.5]{Leo22} with the obvious modifications (we omit further details). 

\medskip

\textsc{Only If part.}  Suppose that the family $\mathscr{A}=\{A^\nu: \nu<\kappa\}$ is $(\mathcal{I}, \mathcal{J})$-regular with respect to $T$. In particular, each $\{A^\nu\}$ is $(\mathcal{I}, \mathcal{J})$-regular with respect to $T$, hence it follows by Theorem \ref{thm:IJREGULARold} that items \ref{item:U1} and \ref{item:U3} hold. 

Since $\mathcal{J}$ is countably generated, there exists a strictly decreasing sequence of sets $(S_t: t \in \omega)$ such that $S \in \mathcal{J}^\star$ if and only if $S_t\subseteq S$ for some $t \in \omega$. In addition, considering that $\mathcal{J}$ is a $F_\sigma$ ideal, there exists a lower semicontinuous submeasure $\varphi: \mathcal{P}(\omega)\to [0,\infty]$ such that 
$$
\mathcal{J}=\{S\subseteq \omega: \varphi(S)<\infty\}, 
$$
see \cite{MR1124539} or \cite[Theorem 1.2.5(a)]{MR1711328}. 
Pick $\bm{x} \in c^b(X,\mathcal{I})$. It follows by 
Theorem \ref{thm:characterizationuniformconvergence} (once we notice that the separability of $Y$ has not been used in the proof of the implications \ref{item:1uniformcondition} $\implies$ \ref{item:2uniformcondition} and \ref{item:2uniformcondition} $\implies$ \ref{item:3uniformcondition}) that 
$$
\forall B \in \mathscr{B}_{\mathscr{A}}, \quad \bm{x}\in \mathrm{dom}(B) \quad \text{ and }\quad 
\mathcal{J}\text{-}\lim B\bm{x}=T(\mathcal{I}\text{-}\lim \bm{x}).
$$

Now, fix $B \in \mathscr{B}_{\mathscr{A}}$. The above observation implies that
$$
B \in (c^b(X,\mathcal{I}), Y^\omega)\quad \text{ and }\quad 
\mathcal{J}\text{-}\lim B\bm{x}=T(\mathcal{I}\text{-}\lim \bm{x})\,\,\text{ for all }\,\,\bm{x} \in c^b(X,\mathcal{I}).
$$
It follows by \cite[Theorem 2.14]{Leo22} (recalling that every countably generated ideal is strongly selective) that $B$ satisfies 
$\|B_{n,\omega}\|<\infty$ for all $n \in \omega$ (which is clear since \ref{item:U1} holds) and, in addition, 
there exists an integer $t(B) \in \omega$ such that 
\begin{equation}\label{eq:existenceTB}
\sup_{n \in S_{t(B)}}\|B_{n,\omega}\|<\infty. 
\end{equation}
Without loss of generality, we can assume that $t(B)$ is the smallest one with this property. 

\begin{claim}\label{claim:tbounded}
The map $t: \mathscr{B}_{\mathscr{A}}\to \omega$ is bounded. 
\end{claim}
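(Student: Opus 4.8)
The plan is to argue by contradiction. Suppose the set $\{t(B): B \in \mathscr{B}_{\mathscr{A}}\}$ is unbounded; I will manufacture a single matrix $B^\star \in \mathscr{B}_{\mathscr{A}}$ for which $\sup_{n \in S_t}\|B^\star_{n,\omega}\| = \infty$ for \emph{every} $t \in \omega$. This contradicts the conclusion drawn just above from \cite[Theorem 2.14]{Leo22}: indeed $B^\star$, being an element of $\mathscr{B}_{\mathscr{A}}$, satisfies $B^\star \in (c^b(X,\mathcal{I}), Y^\omega)$ and $\mathcal{J}\text{-}\lim B^\star\bm{x} = T(\mathcal{I}\text{-}\lim \bm{x})$ for all $\bm{x} \in c^b(X,\mathcal{I})$, while $\|B^\star_{n,\omega}\| < \infty$ for all $n$ by \ref{item:U1}; hence some integer $t(B^\star)$ with $\sup_{n \in S_{t(B^\star)}}\|B^\star_{n,\omega}\| < \infty$ would have to exist.

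The construction rests on one elementary observation. By minimality of $t(B)$, if $t(B) \geq 1$ then $\sup_{n \in S_{t(B)-1}}\|B_{n,\omega}\| = \infty$; and a nonnegative function whose supremum over a set $E$ is infinite must exceed every level $M$ at infinitely many points of $E$ (otherwise the supremum would be the maximum of finitely many finite values and $M$). Hence, for each $M \in \omega$, infinitely many $n \in S_{t(B)-1}$ satisfy $\|B_{n,\omega}\| > M$.

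Now the diagonalization. Enumerate $\omega \times \omega$ as $\{(t_j, M_j): j \in \omega\}$ and build recursively indices $n_0 < n_1 < \cdots$ together with matrices $B^{(0)}, B^{(1)}, \ldots \in \mathscr{B}_{\mathscr{A}}$: at step $j$, use the assumed unboundedness to pick $B^{(j)} \in \mathscr{B}_{\mathscr{A}}$ with $t(B^{(j)}) > t_j$; then $t(B^{(j)})-1 \geq t_j$, so $S_{t(B^{(j)})-1} \subseteq S_{t_j}$, and by the previous observation there are infinitely many $n \in S_{t_j}$ with $\|B^{(j)}_{n,\omega}\| > M_j$, of which we take $n_j$ to be one exceeding $n_0, \dots, n_{j-1}$. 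Finally let $B^\star$ be the matrix whose $n_j$th row is the $n_j$th row of $B^{(j)}$ for every $j$, and whose remaining rows coincide with those of a fixed $A^0 \in \mathscr{A}$; since every row of $B^\star$ is the corresponding row of some matrix in $\mathscr{A}$, we have $B^\star \in \mathscr{B}_{\mathscr{A}}$. For any $t, M \in \omega$, taking $j$ with $(t_j, M_j) = (t, M)$ yields $n_j \in S_t$ with $\|B^\star_{n_j,\omega}\| = \|B^{(j)}_{n_j,\omega}\| > M$; thus $\sup_{n \in S_t}\|B^\star_{n,\omega}\| = \infty$ for all $t$, the desired contradiction.

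I expect the only subtle point to be the simultaneous bookkeeping — keeping the $n_j$ strictly increasing (so that $B^\star$ is well defined), keeping $n_j \in S_{t_j}$ (which forces the choice $t(B^{(j)}) > t_j$ so that $S_{t(B^{(j)})-1} \subseteq S_{t_j}$), and harvesting arbitrarily large group norms within each $S_t$ — rather than any analytic difficulty; once the observation of the second paragraph is in hand, the rest is purely combinatorial and uses no input beyond \cite[Theorem 2.14]{Leo22} and \ref{item:U1}.
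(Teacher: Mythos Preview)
Your proof is correct and is in fact cleaner than the paper's. The paper argues by partitioning $\omega$ according to the value of $\sup_{B\in \mathscr{B}_{\mathscr{A}}}\|B_{n,\omega}\|$, splitting into two cases depending on whether the set $V_\infty$ where this supremum is infinite lies in $\mathcal{J}$; the second case then invokes the $F_\sigma$ structure of $\mathcal{J}$ through a lower semicontinuous submeasure $\varphi$ to build finite blocks $F_k$ with $\varphi(F_k)\to\infty$ and large group norms. Your argument bypasses all of this: you work directly from the minimality of $t(B)$, extract from it the elementary fact that $\|B_{n,\omega}\|$ exceeds every level at infinitely many $n\in S_{t(B)-1}$ (legitimate because \ref{item:U1} makes each $\|B_{n,\omega}\|$ finite), and then diagonalize over pairs $(t,M)$ to assemble a single $B^\star$ violating \eqref{eq:existenceTB}. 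Your route uses only the decreasing filtration $(S_t)$ and never touches the submeasure, so it is both shorter and more elementary; the paper's approach, by contrast, exhibits more explicitly how the failure distributes across $\omega$ via the sets $V_k$, but at the cost of the case split and the extra machinery.
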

\begin{proof}
Suppose for the sake of contradiction that the image $t[\mathscr{B}_{\mathscr{A}}]$ is not bounded, and define 
$$
\forall k \in \omega, \quad 
V_k:=\left\{n \in \omega: k\le \sup_{B\in \mathscr{B}_{\mathscr{A}}} \|B_{n,\omega}\|<k+1 \right\} 
$$
and 
$$
V_\infty:=\left\{n \in \omega: \sup_{B\in \mathscr{B}_{\mathscr{A}}} \|B_{n,\omega}\|=\infty \right\}.  
$$
Note that $\{V_k: k \in \omega\} \cup \{V_\infty\}$ is a partition of $\omega$, and that by construction each 
$$
W_k:=V_\infty \cup \tilde{V}_k, 
\quad \text{ where }\tilde{V}_k:= \bigcup\nolimits_{i\ge k}V_i,
$$
does not belong to $\mathcal{J}$ (hence, $W_k$ is infinite and $\varphi(W_k)=\infty$ for each $k \in \omega$). 

\medskip

\textsc{Case one.} First, suppose that $V_\infty \notin \mathcal{J}$. For each $n \in V_\infty$, there exists an index $\nu_n<\kappa$ such that $\|A^{\nu_n}_{n,\omega}\|\ge n$. Let $B^\star$ be the matrix of linear operators such that the $n$th row of $B^\star$ is the $n$th row of $A^0$ if $n\notin V_\infty$; on the other hand, if $n \in V_\infty$, the $n$th row of $B^\star$ is the $n$th row of $A^{\nu_n}$. It follows by construction that $B^\star \in \mathscr{B}_{\mathscr{A}}$ and, on the other hand, the sequence $(\|B^\star_{n,\omega}\|: n \in \omega)$ is not $\mathcal{J}$-bounded, which contradicts \eqref{eq:existenceTB}. 

\medskip

\textsc{Case two.} Suppose that $V_\infty \in \mathcal{J}$, so that $\tilde{V}_k \notin \mathcal{J}$ for all $k \in \omega$. Define a sequence $(F_k: k \in \omega)$ of finite subsets of $\omega\setminus V_\infty$ as it follows: 
\begin{enumerate}
\item Set $F_0:=\{\min \tilde{V}_0\}$; 
\item Suppose that $F_k$ has been defined for some $k \in \omega$, and let $\iota_k\in\omega$ be the largest integer such that $F_k \cap V_{\iota_k}\neq \emptyset$. 
\item Since $\tilde{V}_{1+\iota_k} \notin \mathcal{J}$, it is possible to fix a finite subset $F_{k+1}\subseteq  \tilde{V}_{1+\iota_k}$ such that $\min F_{k+1}>\max F_k$ and $\varphi(F_{k+1}) \ge k+1$ (this is possible since $\varphi$ is a lower semicontinuous submeasure and $\varphi(\tilde{V}_{1+\iota_k})=\infty$). 
\end{enumerate}
Lastly, define $F:=\bigcup_{k\in \omega}F_k$. Let also $B^\star$ be the matrix of linear of linear operators such that the $n$th row of $B^\star$ is the $n$th row of $A^0$ if $n\notin F$ or $n \in F_0$; on the other hand, if $n \in F_k$ for some nonzero $k \in \omega$, the $n$th row of $B^\star$ is the $n$th row of $A^{\nu_k}$ where $\nu_k<\kappa$ is chosen such that $\|A^{\nu_k}_{n,\omega}\| \ge 1+\iota_{k-1}$. Notice that, since $(\iota_k)$ is strictly increasing, then $\|A^{\nu_k}_{n,\omega}\| \ge k$. 
It follows by construction that $B^\star \in \mathscr{B}_{\mathscr{A}}$ and 
$$
J_k:=\left\{n \in \omega: \|B^\star_{n,\omega}\|\ge k \right\}\supseteq \bigcup\nolimits_{i\ge k}F_i
$$
for all nonzero $k \in \omega$. By the monotonicity of $\varphi$, we get $\varphi(J_k)=\infty$ for each $k>0$, i.e., $J_k \notin \mathcal{J}$. This contradicts \eqref{eq:existenceTB} which states that the sequence $(\|B_{n,\omega}^\star\|: n \in \omega)$ is $\mathcal{J}$-bounded. 
\end{proof}

\medskip

Since $t$ is bounded by Claim \ref{claim:tbounded}, it follows that there exists $m \in \omega$ such that $\sup_{n \in S_m}\|B_{n,\omega}\|<\infty$ for all $B \in \mathscr{B}_{\mathscr{A}}$. In particular, since $S_m \in \mathcal{J}^\star$, item \ref{item:U2} holds. 

Lastly, choosing the constant sequences $(x,x,\ldots) \in c^b(X,\mathcal{I})$ and each $\bm{x} \in c_{00}^b(X,\mathcal{I})$, we obtain by the definition of $(\mathcal{I}, \mathcal{J})$-regularity of $\mathscr{A}$ with respect to $T$ that items \ref{item:U4} and \ref{item:U5} hold, respectively. This finishes the proof. 
\end{proof}

\medskip

\begin{proof}
[Proof of Theorem \ref{cor:maincharacterizationIJregular}]
Endow both $X=\mathbf{R}^d$ and $Y=\mathbf{R}^m$ with the corresponding $1$-norms (so that $\|x\|=\sum_i |x_i|$ for all $x \in X$). 
Then it is not difficult to check that $\|A_{n,k}\|=\max_j \sum_i|a_{n,k}(i,j)|$ for all $n,k \in \omega$. Since $X$ is an abstract $L$-space (i.e., $X$ is Banach lattice with $\|x+y\|=\|x\|+\|y\|$ for all $x,y \in X$ with $x,y \ge 0$), it follows, as in the proof of \cite[Corollary 2.11]{Leo22}, that 
$$
\frac{1}{d}\sum\nolimits_{k \in E}\sum\nolimits_{i,j}|a_{n,k}(i,j)| \le \|A_{n,E}\|
=\sum\nolimits_{k \in E}\|A_{n,k}\|
\le \sum\nolimits_{k \in E}\sum\nolimits_{i,j}|a_{n,k}(i,j)|
$$
for all $n \in \omega$ and $E\subseteq \omega$.
We obtain that items \ref{item:D1} and \ref{item:D2} are equivalent to items \ref{item:U1} and \ref{item:U2}, respectively, cf. the proof of \cite[Theorem 2.8]{Leo25}. 
Thanks to \cite[Theorem 3.8]{Leo22}, we obtain that item \ref{item:U1} is equivalent to $A^\nu \in (\ell_\infty(X), \ell_\infty(Y))$ for each $\nu<\kappa$ in finite dimension, hence item \ref{item:D1} implies item \ref{item:U3}. 
In addition, it is easy to see that item \ref{item:D3} is equivalent to item \ref{item:U4}. 

At this point, we recall that the unit ball of $\ell_\infty(\mathbf{R}^d)$ coincides with the norm closure of the convex hull of its extreme points, thanks to \cite[Proposition 3.1]{Leo25} (the one-dimensional case can be found in \cite{Goodner64}). Taking into account Theorem \ref{thm:characterizationuniformconvergence} above, we obtain by \cite[Corollary 2.6]{Leo25} that item \ref{item:U5} is equivalent to: 
\begin{enumerate}[label={\rm (\textsc{U}\arabic{*}$^\flat$)}] 
\setcounter{enumi}{4}
\item \label{item:U5flat} $\mathcal{J}\text{-}\lim_n \sum_{k \in E}B_{n,k}x_k=0$ for all $B \in \mathscr{B}_{\mathscr{A}}$, all $E \in \mathcal{I}$, and all $\bm{x} \in \mathrm{Ext}(B_X)^\omega$\textup{.} 
\end{enumerate}
(Here, as usual, $\mathrm{Ext}(S)$ stands for the set of extreme points of $S\subseteq X$.) Let $\{B^\mu: \mu<\tau\}$ be an enumeration of $\mathscr{B}_{\mathscr{A}}$, and write $B^\mu=[\,b^\mu_{n,k}(i,j): 1\le i\le m, 1\le j\le d\,]$ for each $\mu<\tau$. 
Now, pick $E \in \mathcal{I}$, an index $\mu<\tau$, let $e_t:=(0,\ldots,0,1,0,\ldots,0) \in \mathbf{R}^d$ be the vector which is $1$ at the $t$th component and $0$ otherwise for each $t \in \{1,\ldots,d\}$, and suppose that $\mathcal{J}\text{-}\lim_n \sum_{k \in E}B_{n,k}^\mu x_k=0$ for all sequences $(x_k: k \in \omega)$ taking values in $\mathrm{Ext}(B_X)=\{\pm e_t: t \in \{1,\ldots,d\}\}$. This is equivalent to 
$
\mathcal{J}\text{-}\lim_{n} \sum\nolimits_{k \in E}\sum\nolimits_j b_{n,k}(i,j)^\mu x_k(j)=0
$ 
for all $i\in \{1,\ldots, m\}$ and all sequences $(x_k: k \in\omega)$ with values in $\{\pm e_t: t \in \{1,\ldots,d\}\}$. 
Fix also indices $i_0 \in \{1,\ldots,m\}$ and $j_0 \in \{1,\ldots,d\}$ and choose the constant sequence $(x_k: k \in \omega)$ with value  $e_{j_0}$. It follows that $\mathcal{J}\text{-}\lim_{n} \sum\nolimits_{k \in E} b_{n,k}(i_0,j_0)^\mu=0$.
Therefore item \ref{item:U5flat} turns out to be equivalent to: 
\begin{enumerate}[label={\rm (\textsc{D}\arabic{*}$^\natural$)}] 
\setcounter{enumi}{3}
\item \label{item:D4flat} $\mathcal{J}\text{-}\lim_n \sum_{k \in E}b^\mu_{n,k}(i,j)=0$ for all integers $1\le i\le m$, $1\le j\le d$, all sets $E \in \mathcal{I}$, and all indices $\mu<\tau$\textup{.} 
\end{enumerate}
Applying Theorem \ref{thm:characterizationuniformconvergence} once we fix a set $E$ and integers $i,j$, we obtain that item \ref{item:D4flat} is equivalent to item \ref{item:D4}. To sum up, \ref{item:U5} is equivalent to \ref{item:D4}. 
The conclusion follows by the characterization given in Theorem \ref{thm:uniformtoeplitz}.
\end{proof}

\medskip

\begin{proof}
[Proof of Corollary \ref{cor:maincorollaryuniform}] 
This follows by Theorem \ref{cor:maincharacterizationIJregular} by setting $d=m=1$.
\end{proof}

\medskip

\begin{proof}
[Proof of Corollary \ref{cor:maptozero}]
Let $\mathcal{I}$ be a maximal ideal on $\omega$, so that $c^b(X,\mathcal{I})=\ell_\infty(X)$. Let also $T$ be the zero operator. As in the other proof, denote $\mathscr{B}_{\mathscr{A}}=\{B^\mu: \mu<\tau\}$. 

\medskip

\textsc{If part.} 
This is obvious since \ref{item:D3sharp} 
implies \ref{item:D3} with $t(i,j)=0$ for all $1\le i\le m$ and $1\le j\le d$. 

\medskip

\textsc{Only If part.} 
Observe that $\mathscr{A}$ (hence also $\{A^\nu\}$ for every $\nu<\kappa$) is $(\mathcal{I}, \mathcal{J})$-regular with respect to $T$. It follows by Theorem \ref{cor:maincharacterizationIJregular} that items \ref{item:D1} and \ref{item:D2} hold. 
In addition, by the uniform condition in \eqref{eq:uniformconditionwrgwg}, $\{B^\mu\}$ is $(\mathcal{I}, \mathcal{J})$-regular with respect to $T$ for every $\mu<\tau$. Thanks to \cite[Corollary 2.12]{Leo22}, we obtain that 
$$
\mathcal{J}\text{-}\lim_{n\to \infty} \sum\nolimits_k \sum\nolimits_{i,j} |b^\mu_{n,k}(i,j)|=0 \quad \text{ uniformly on }\mu.
$$
Fix integers $1\le i\le m$ and $1\le j\le d$ and consider the positive and negative part of each $b^\mu_{n,k}(i,j)$, so that $\mathcal{J}\text{-}\lim_{n} \sum\nolimits_k (b^\mu_{n,k}(i,j))^+=0$ uniformly on $\mu$ (and similarly for the negative part). It follows by Theorem \ref{thm:characterizationuniformconvergence} that $\mathcal{J}\text{-}\lim_{n} \sum\nolimits_k (a^\nu_{n,k}(i,j))^+=0$ uniformly on $\nu$ (and similarly for the negative part). Therefore also item \ref{item:D3sharp} 
holds. 
\end{proof}

\medskip

\begin{proof}
[Proof of Theorem \ref{thm:kingmultidimensional}]
First of all, note that if $A^\nu:=F^{\sigma,\nu}A=[a^\nu_{n,k}(i,j): 1\le i\le m, 1\le j\le d, \text{ and }n,k \in \omega]$ then $a^\nu_{n,k}(i,j):=\frac{1}{n+1}\sum_{h=0}^n a_{\sigma(\nu+h),k}(i,j)$. This follows by the identity
\begin{displaymath}
\begin{split}
A^\nu_n \bm{x}&=F^{\sigma,\nu} (A\bm{x})=\frac{1}{n+1}\sum_{h=0}^n A_{\sigma(\nu+h)}\bm{x}\\
&=\frac{1}{n+1}\sum_{h=0}^n \sum_{k\in \omega} A_{\sigma(\nu+h),k}x_k
=\frac{1}{n+1}\sum_{k \in \omega} \sum_{h=0}^n A_{\sigma(\nu+h),k}x_k.
\end{split}
\end{displaymath}
Taking also into account the observation preceding the statement of 
Theorem \ref{thm:kingmultidimensional}, 
we obtain that items \ref{item:K2} and \ref{item:K3} are a rewriting of items \ref{item:D3} and \ref{item:D4}, respectively. 

\medskip

\textsc{If part.} Suppose that item \ref{item:K1} holds and observe that, for all $\nu \in \omega$ and all integers $1\le i\le m$ and $1\le j\le d$, we have 
\begin{displaymath}
\begin{split}
\sum\nolimits_{k \in \omega} |a^\nu_{n,k}(i,j)| 
&\le \frac{1}{n+1}\sum\nolimits_{k\in \omega}\sum\nolimits_{h\in n+1} |a_{\sigma(\nu+h),k}(i,j)|\\
&\le \max_{h \in n+1}\,\sum\nolimits_{k\in \omega}|a_{\sigma(\nu+h),k}(i,j)| 
\le \sup\nolimits_n\sum\nolimits_{k\in \omega} |a_{n,k}(i,j)|.
\end{split}
\end{displaymath}
We obtain 
$$
\sup_{\nu \in \omega} \, \sup_{n \in \omega} 
\, \sum\nolimits_{k\in \omega} \sum\nolimits_{i,j} |a^\nu_{n,k} (i,j)|<\infty,
$$
which implies both items \ref{item:D1} and \ref{item:D2}. The conclusion follows by Theorem \ref{cor:maincharacterizationIJregular}.  

\medskip

\textsc{Only If part.} Suppose that $A$ is $(\mathcal{I}, \mathcal{J}, \sigma)$-almost regular with respect to $T$. Thanks to Theorem \ref{cor:maincharacterizationIJregular} and the observations above, both \ref{item:K2} and \ref{item:K3} hold. Lastly, observe that $A$ belongs, in particular, to the matrix class $(c(X), \ell_\infty(Y))$. It follows by \cite[Corollary 3.11]{Leo22} that the latter is equivalent to item \ref{item:K1}, cf. also \cite[Equation (5.3)]{Leo22}.  
\end{proof}

\medskip

\begin{proof}
[Proof of Proposition \ref{prop:uniflimsup}]
Fix $\bm{x} \in \ell_\infty$ and let $\eta_L$ and $\eta_R$ be the values on the left hand side and right hand side of \eqref{eq:claimuniflimsup}. First of all, for each $B \in \mathscr{B}_{\mathscr{A}}$ it is clear that $B_n\bm{x}\le \sup \{A_n\bm{x}: A \in \mathscr{A}\}$. It follows that $\mathcal{I}\text{-}\limsup B\bm{x}\le \eta_L$. By the arbitrariness of $B$, we obtain $\eta_R\le \eta_L$. 

Conversely, observe that $\eta_L$ is the maximal $\mathcal{I}$-cluster point of the sequence $(\sup \{A_n\bm{x}: A \in \mathscr{A}\}: n \in \omega)$. Hence there exists a function $\alpha: \omega\to \omega$ such that 
$$
\forall \varepsilon>0, \quad \{n \in \omega: A^{\alpha(n)}_n\bm{x}\ge \eta_L-\varepsilon\} \notin \mathcal{I}. 
$$
Pick a matrix $B^\star=(b^\star_{n,k}: n,k \in \omega) \in \mathscr{B}_{\mathscr{A}}$ so that $b^\star_{n,k}=a_{n,k}^{\alpha(n)}$ for each $n \in \omega$. It follows by construction that $\mathcal{I}\text{-}\limsup_n B^\star_n\bm{x} \ge \eta_L$. Therefore $\eta_R \ge \eta_L$. 
\end{proof}

\medskip

\begin{proof}
[Proof of Corollary \ref{cor:uniforminclusioncores}]
Thanks to Proposition \ref{prop:uniflimsup}, condition \eqref{eq:newinclusionicores} can be rewritten equivalently as 
$$
\forall B \in \mathscr{B}_{\mathscr{A}}, \forall \bm{x} \in \ell_\infty, \quad 
\limsup B\bm{x} \le \mathcal{I}\text{-}\limsup \bm{x}.
$$
Applying Theorem \ref{thm:connor} at each $B \in  \mathscr{B}_{\mathscr{A}}$, it follows that \eqref{eq:newinclusionicores} holds if and only if each matrix $B$ satisfies items \ref{item:C1}--\ref{item:C3}. The latter ones are equivalent to items \ref{item:L1}--\ref{item:L3}, thanks to Theorem \ref{thm:characterizationuniformconvergence}. 
\end{proof}

\subsection*{Acknowledgments} 
The author is grateful to an anonymous referee for several helpful comments and a careful reading of the manuscript.


\bibliographystyle{amsplain}

\providecommand{\href}[2]{#2}

\end{document}